\newtheorem{theorem}{\bf Theorem}[section]
\newtheorem{lemma}[theorem]{\bf Lemma}
\newtheorem{proposition}[theorem]{\bf Proposition}
\newtheorem{corollary}[theorem]{\bf Corollary}
\theoremstyle{definition}
\newtheorem{definition}[theorem]{\bf Definition}
\journal{Journal of LMS}
\begin{document}

\begin{frontmatter}

\title{Sharp stability of $\Delta u - u + |u|^{p-1}u$ near a finite sum\\ of ground states}

\author[whu]{Hua Chen\corref{cor1}}
\ead{chenhua@whu.edu.cn}

\author[whu]{Yun Lu Fan}
\ead{yunlufan@whu.edu.cn}

\author[hnu]{Xin Liao}
\ead{xin\_liao@whu.edu.cn}

\cortext[cor1]{Corresponding author}

\address[whu]{School of Mathematics and Statistics, Wuhan University, Wuhan 430072, China}
\address[hnu]{School of Mathematics and Statistics, Hunan Normal University, Changsha 410012, China}

\begin{abstract}
Let $ Q $ denote the unique radial solution to the equation
$$
\Delta Q - Q + |Q|^{p-1} Q = 0 \quad \text{in} \ \mathbb{R}^d, \quad Q > 0, \quad Q \in H^1(\mathbb{R}^d),
$$
where $1<p< 2^* - 1$. We establish sharp quantitative stability estimates near finite sums of ground states for this equation. Specifically, for a set $ \{\widetilde{y}_k\}_{k=1}^m \subset \mathbb{R}^d $, where $ \min\limits_{i \neq j} |\widetilde{y}_i - \widetilde{y}_j| $ is sufficiently large, we show that if $ u $ is close to $ \sum_{k=1}^m Q(\cdot + \widetilde{y}_k) $ in $H^1(\mathbb{R}^d)$, then the following inequality holds:
$$
\inf_{\{y_k\} \subset \mathbb{R}^d} \| u - \sum_{k=1}^m Q(\cdot + y_k) \|_{H^1} \leq C(m,d, p) F_{d,p}\left( \|\Delta u - u + |u|^{p-1} u\|_{H^{-1}} \right).
$$
The function $F_{d,p}:\mathbb{R}^+\to\mathbb{R}^+$ is defined for $s>0$ via
$$
  F_{d,p}(s)=
  \begin{cases}
  s, & \mbox{if } p>2 , \mbox{or } p=2, d=4,5 \\
  (|\ln s| +1)^{\frac{1}{2}}s, & \mbox{if } p=2, d=1 ,\\
  |\psi(s)|^{-\frac{1}{4}}e^{- \psi(s)}, & \mbox{if } p=2, d=2,\\
   |\psi(s)|^{-1}e^{-\psi(s)}\ln^{\frac{1}{2}} (\psi(s)+2), & \mbox{if } p=2, d=3,\\
   |\psi(s)|^{(\frac{1}{4}-\frac{p}{2})(d-1)}e^{-\frac{p}{2}\psi(s)}, & \mbox{if }  1<p<2, d\geq 1
        \end{cases}
$$
where  $\psi$ is the inverse of the monotone function
$\phi (t)=t^{-(d-1)/2}e^{-t}$ in $(0, +\infty)$,
and we extend $F_{d,p}$ continuously at $0$ by setting $F_{d,p}(0)=0$.

Furthermore, by employing the co-compact method, we derive a global stability result from the above local one when $ d = 1 $ or when $ u $ is non-negative. In the complex-valued case, the analysis becomes more intricate, and may depend on the phase difference. Finally, we give some applications to the nonlinear Schrödinger equation.
\end{abstract}

\begin{keyword}
Quantitative stability \sep Ground state \sep Nonlinear Schr\"odinger equation 
\MSC[2020] 35B09\sep 35B33\sep 35B35\sep 35J20\sep 35J60   
\end{keyword}

\end{frontmatter}

\section{Introduction}
$\mathbf{Notations}$:
Throughout the entire paper, the symbol
$C$ will denote a positive constant, which may vary throughout the discussion. The term $o_{R}(1)$  represents an infinitesimal quantity with respect to $R$ as $R\to+\infty$. Furthermore, we define  $o_{R}(f)=o_{R}(1)f$. 
We say  $g=O(f)$ if $|g|\leq C|f|$ for some constant $C$.  
The notation
$X\lesssim Y$ means that there exists a positive constant $C$ such that $X\leq CY$. Similarly, $X\gtrsim Y$  means that there exists a positive constant
$C$ such that  $X\geq CY$.
We say $X\approx Y$  if both $X \lesssim Y$ and $X\gtrsim Y$ hold.
Additionally, $u^{+}$ and $u^{-}$ denote the positive and negative parts of $u$, respectively.

\subsection{Motivation}
The ground state $Q$ is  the unique radial solution to the equation, 
\[\Delta Q-Q+Q^p=0, ~~ Q>0, ~Q\in H^{1}(\mathbb{R}^d).\]
This equation arises not only in the context of Bose-Einstein condensates in physics (\cite{BEC}), but is also closely related to pattern formation in mathematical biology (\cite{Wei2013}). Moreover,
the function  $Q$ attains the best constant in both the nonhomogeneous Sobolev inequality and the Gagliardo–Nirenberg inequality, making it a fundamental object in the study of these functional inequalities.

Ground states also play a crucial role in the theory of nonlinear dispersive equations, such as
 the  Nonlinear Schrödinger (NLS) and Korteweg–de Vries (KdV) equations. For instance, the function
 $e^{it}Q$ is a standing wave solution to the NLS equation:
\[ i\partial_t u=-\Delta u-|u|^{p-1} u,\]
whereas $Q(x-t)$ is a traveling wave solution to the KdV equation:
\[ -\partial_tu=\partial_{xxx}u+pu^{p-1}\partial_{x}u.\]
Both the NLS and KdV equations are invariant under various symmetric transformations. For the NLS, the family of 1-soliton solutions can be expressed as: $$\lambda^{\frac{p-1}{2}} e^{i\lambda^2t+i\theta+iv x-iv^2t}Q(\lambda( x+y-2vt)),$$
where $\lambda \in \mathbb{R}$ is the  scaling parameter,  $\theta \in [0, 2\pi)$ is the phase  rotation parameter, $y\in \mathbb{R}$ is the translation parameter, and $v\in \mathbb{R}$ is the speed parameter.
 For the KdV equation, the 1-soliton solutions take the form:
  \[v^{\frac{p-1}{2}}  Q(v (x-vt+y)),\]
  with  the translation parameter
 $y\in \mathbb{R}$ and speed parameter $v\in \mathbb{R}$.
The stability of ground states is crucial in scattering and soliton theories for the NLS and KdV equations. For insights, see Tao \cite{Tao}  the references therein. Notably, Martel, Merle, and Tsai \cite{Martel2002, Martel2006} analyzed the  $H^1$
orbital stability of sums of 1-soliton solutions with different speed parameters. More recently, the
$H^s$ orbital stability of multi-soliton solutions has been investigated by Koch and Tataru \cite{Koch2024}.

Over the past few years, the stability of functional inequalities and elliptic equations has attracted significant attention (cf. \cite{Bonforte,  Ciraolo, Wei, FigalliNeumayer, FigalliZhang, Frank, FuscoMaggiPratelli, LiuZhangZou, Wei2022}).
We illustrate the concept of stability using the Sobolev inequality as an example.  The homogeneous Sobolev inequality states that for $d\geq 3$ and any function $u\in \dot{H}^{1}(\mathbb{R}^d)$, the following holds:
\[S\|u\|_{L^{2^*}}^2\leq \|\nabla u\|_{L^2}^2\]
where $2^*=\frac{2d}{d-2}$ is the critical Sobolev exponent, and $S$ is the best constant. The extremal functions achieving equality (up to a multiplicative constant) are known as Aubin–Talenti (\cite{Aubin,Talenti}) bubbles: 
$$U[\lambda, y](x)=(d(d-2))^{{\frac{d-2}{4}}}(\frac{\lambda}{1+\lambda^2|x-y|^2})^{\frac{d-2}{2}}.$$
These bubbles are all the positive solutions to the following well-known Yamabe equation in the whole $\mathbb{R}^d$:
\begin{equation}\label{yamabe}
-\Delta u=|u|^{2^*-2}u~~\mbox{in }  \mathbb{R}^d.
\end{equation}

The stability of the Sobolev inequality concerns the following question:
If a function 
$u$ nearly attains equality in the Sobolev inequality, must it be quantitatively close to one of the extremal functions?
Bianchi and Egnell \cite{Bianchi} provided a precise answer by proving the quantitative stability estimate: 
\begin{equation}\label{1.2}
 \inf\limits_{y,\lambda ,c}\|\nabla( u- cU[\lambda, y])\|_{L^2} \leq C(d)  (\|\nabla u\|_{L^2}^2 -S\|u\|_{L^{2^*}}^2).
\end{equation}

A more challenging problem is  to consider the stability of  equation \eqref{yamabe}: whether a function $u$ that almost solves \eqref{yamabe}  must be quantitatively close to
Aubin–Talenti bubbles?   Struwe's celebrated global compactness result (cf. \cite{Struwe}) yields that:
 \begin{proposition}
  Let $\{u_k\} \subset \dot{H}^{1}(\mathbb{R}^d)$ be a sequence of nonnegative functions  such that $$(m-\frac{1}{2})S^{\frac{d}{2}}\leq \|u_k\|_{ \dot{H}^{1}}
  \leq (m+\frac{1}{2})S^{\frac{d}{2}} $$  for some positive integer $m$, and
  \[ \|\Delta u_k+|u_k|^{2^*-2}u_k   \|_{\dot{H}^{-1}} \to 0 ~\mbox{as } k\to+\infty. \]
  Then there exists a sequence $\{\lambda_i^{(k)}, y_i^{(k)}\}$ such that
  \[ \|u_k-\sum_{i=1}^{m} U[\lambda_i^{(k)}, y_i^{(k)}] \|_{\dot{H}^{1}} \to 0 ~\mbox{as } k\to+\infty. \]
  Furthermore, for $i\neq j$, we have
  \[ \varepsilon_{ij}^{(k)}:=\min \{ \frac{\lambda_i^{(k)}}{\lambda_j^{(k)}},  \frac{\lambda_j^{(k)}}{\lambda_i^{(k)}}, \frac{1}{\lambda_i^{(k)}\lambda_j^{(k)} |y_i^{(k)}-y_j^{(k)}|^2} \} \to 0 ~\mbox{as } k\to+\infty.   \]
\end{proposition}

Motivated by this, it is natural to compare the asymptotic rates of convergence of $$\|\Delta u+|u|^{2^*-2}u   \|_{\dot{H}^{-1}}, ~\|u-\sum_{i=1}^{m} U[\lambda_i, y_i] \|_{\dot{H}^{1}} \mbox{ and } \varepsilon_{ij}$$  as they tend to zero.

\begin{definition}
 For a finite collection of bubbles 
 $\{U[y_i,\lambda_i ]\},$ 
 define
$$ \varepsilon_{ij}:=\min \{ \frac{\lambda_i}{\lambda_j},  \frac{\lambda_j}{\lambda_i}, \frac{1}{\lambda_i\lambda_j |y_i-y_j|^2} \}.$$
 We say these bubbles are  $\delta$-weakly interacting if $$\varepsilon:=\max\{\varepsilon_{ij}\}\leq \delta.$$
 \end{definition}
 Figalli and Glaudo \cite{Figalli2020} were the first to study the stability of the Yamabe equation near a finite sum of weakly interacting Talenti bubbles. They established a sharp quantitative  estimate for  space dimensions $3\leq d \leq 5$. Later, Wei, Deng, and Sun \cite{Wei} extended these results by proving sharp quantitative stability estimates for $d\geq 6$.

We summarize their results as follows:

 \begin{theorem}
For each $m$ and $u\in \dot{H}^{1}(\mathbb{R}^d)$, there exists constant $\widetilde{\delta}=\widetilde{\delta}(n, m)$ such that for any $\delta \in (0, \widetilde{\delta})$,
if there exists a family of $\delta$-weakly interaction  bubbles  $\{U[\widetilde{y_k},\widetilde{\lambda_k}]\}_{k=1}^{m}$  satisfying
\[\|u-\sum_{k=1}^{m}U[\widetilde{y_k},\widetilde{\lambda_k}] \|_{\dot{H}^{1}} <\delta,\]
then  there exists  a finite collection of bubbles $\{U[y_i,\lambda_i ]\}_{i=1}^{m}$, such that 
 the following sharp stability estimate  
\begin{equation}
\|u-\sum_{k=1}^{m}U[y_k,\lambda_k] \|_{\dot{H}^{1}} \lesssim
\begin{cases}
 \Gamma(u), & \mbox{if } 3\leq d\leq 5 \\
  \Gamma(u)|\log \Gamma(u) |^{\frac{1}{2}}, & \mbox{if } d=6 \\
  \Gamma(u)^{\frac{d+2}{2(d-2)}}, & \mbox{if } d>6.
\end{cases}
\end{equation}
holds for
$\Gamma(u):=\|\Delta u+|u|^{2^*-2}u\|_{\dot{H}^{-1}}.$ Furthermore, we have
 $$\varepsilon^{\frac{d-2}{2}} \lesssim \Gamma(u).$$
\end{theorem}

\subsection{Main results}
By using the co-compact method, Cao-Peng-Yan \cite[Theorem 1.4.1]{Cao2021} obtained the following result:
 \begin{theorem}\label{theorem1.3} Let $1< p<2^*-1$.
 Let $\{u_n\}$ be a bounded sequence in  $H^{1}$ such that $$\|\Delta u_n-u_n+|u_n|^{p-1}u_n\|_{H^{-1}}\to 0.$$ (i.e. $\{u_n\}$ is a Palais–Smale  sequence.)   Then up to a subsequence,
  there exists an integer  $m$, a sequence $\{v^{k}\}_{k=1}^{m}$ with $$\Delta v^{k}-v^{k}+|v^{k}|^{p-1}v^{k}=0,$$ and a sequence of points $\{y_{n,k}\}$ such that
   \[\min_{i\neq j}|y_{n,i}-y_{n,j}| \to +\infty, ~~ u_n \to \sum_{k=1}^{m} v^{k}(\cdot+y_{n,k})\mbox{ as } n\to+\infty, \] 
and  it holds that
   $$\|u_{n}\|_{H^1}^2 \to \sum_{k=1}^{m} \|v^{k}\|_{H^1}^2 \mbox{ as } n\to+\infty .$$
 \end{theorem}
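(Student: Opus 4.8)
The plan is to establish the profile decomposition for Palais–Smale sequences of the action functional associated to \eqref{cfe} by a now-standard concentration-compactness/co-compactness iteration, using the translation group $\mathbb{R}^d$ as the only non-compact symmetry (there is no scaling here, unlike the Yamabe case). Write $E(u)=\tfrac12\|u\|_{H^1}^2-\tfrac1{p+1}\|u\|_{L^{p+1}}^{p+1}$, so that $E'(u_n)=-(\Delta u_n-u_n+|u_n|^{p-1}u_n)\to 0$ in $H^{-1}$, and note first that a bounded Palais–Smale sequence with $\|u_n\|_{H^1}\to 0$ forces $\|u_n\|_{H^{1}}\to 0$ (by Sobolev, since $p+1<2^*$ and the $L^{p+1}$ term is then negligible against the coercive quadratic part), which corresponds to the empty-sum case $m=0$.

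The main steps. First, since $\{u_n\}$ is bounded in $H^1$, after passing to a subsequence $u_n\rightharpoonup v^1$ weakly; by weak-to-weak continuity of $E'$ one gets $E'(v^1)=0$, i.e. $v^1$ solves $\Delta v^1-v^1+|v^1|^{p-1}v^1=0$. Set $w_n^1:=u_n-v^1(\cdot)$ and check (Brezis–Lieb) that $\|u_n\|_{H^1}^2=\|v^1\|_{H^1}^2+\|w_n^1\|_{H^1}^2+o(1)$ and that $\{w_n^1\}$ is again Palais–Smale for $E$; this uses the Brezis–Lieb lemma for the $L^{p+1}$ nonlinearity together with the algebraic decomposition of $E'$. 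Second — the extraction step — if $w_n^1\not\to 0$ in $H^1$, one shows $\limsup_n \|w_n^1\|_{L^{p+1}}>0$ (again because Palais–Smale plus $L^{p+1}\to 0$ would force $H^1\to 0$), hence by the translation co-compactness of the embedding $H^1(\mathbb{R}^d)\hookrightarrow L^{p+1}(\mathbb{R}^d)$ there exist $y_{n,1}\in\mathbb{R}^d$ with $w_n^1(\cdot-y_{n,1})\rightharpoonup v^2\neq 0$, and necessarily $|y_{n,1}|\to\infty$ (otherwise the translates stay compact and the weak limit would already be in $u_n$'s decomposition, i.e.\ $v^2$ would be absorbed into $v^1$). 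Again $v^2$ is a nonzero solution, and one peels it off. Third, iterate: each extracted profile $v^k$ satisfies $\|v^k\|_{H^1}^2\geq \|Q\|_{H^1}^2$ (the least-energy nonzero critical point has minimal $H^1$-norm among nonzero solutions, a standard Nehari-manifold fact for this subcritical equation), so the bound $\sum_k\|v^k\|_{H^1}^2\leq \lim\|u_n\|_{H^1}^2<\infty$ forces the process to terminate after finitely many steps $m$. Finally, reconcile bookkeeping of the translation parameters: with $y_{n,k}:=-(y_{n,1}+\cdots+y_{n,k-1})$-type partial sums (the precise additive convention coming from the successive recenterings), one gets $u_n-\sum_{k=1}^m v^k(\cdot+y_{n,k})\to 0$ in $H^1$, the Pythagorean identity $\|u_n\|_{H^1}^2\to\sum\|v^k\|_{H^1}^2$, and the separation $\min_{i\neq j}|y_{n,i}-y_{n,j}|\to\infty$, which follows because at each stage the new center was extracted by a translation diverging relative to all previous ones.

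The step I expect to be the genuine obstacle is the co-compactness / divergence-of-centers claim, i.e.\ showing that the residual $w_n^k$ either tends to zero in $H^1$ or concentrates along a sequence of translates going to infinity with the translation parameter separating from all previously chosen ones. One convenient way to package this is the cited result of Cao–Peng–Yan \cite[Theorem 1.4.1]{Cao2021} (which is exactly the statement to be proved and whose proof already contains this machinery); alternatively one invokes Lions' concentration-compactness, ruling out \emph{vanishing} by the nonvanishing of $\|w_n^k\|_{L^{p+1}}$ and ruling out \emph{dichotomy at finite scale} by the strict subadditivity afforded by the Pythagorean splitting of $E$. A subsidiary technical point is to verify that after translating by $y_{n,k}$ the shifted residual remains a Palais–Smale sequence — this is immediate since $E$ and $E'$ are translation-invariant — and that weak limits of translates are still critical points, which again follows from weak continuity of $E'$ on bounded sets in the subcritical regime $1<p<2^*-1$.
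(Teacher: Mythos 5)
Your outline is essentially correct, but note that the paper itself offers no proof of this statement: Theorem \ref{theorem1.3} is imported verbatim from Cao--Peng--Yan \cite[Theorem 1.4.1]{Cao2021}, so there is nothing in the text to compare against except that citation. What you sketch --- weak limit is a critical point by weak-to-weak continuity of $E'$ in the subcritical range, Brezis--Lieb/Pythagorean splitting of the norm and of $E'$ for the recentered residual, extraction of a new profile via Lions' vanishing lemma (translation co-compactness of $H^1\hookrightarrow L^{p+1}$), a uniform lower bound on the $H^1$-norm of nontrivial critical points forcing termination, and divergence of the relative translations because each residual converges weakly to zero around all previously used centers --- is exactly the standard global-compactness machinery that the cited proof consists of, so your route is the same as the one the paper relies on, just written out. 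Three small remarks: (i) the sentence ``$\|u_n\|_{H^1}\to 0$ forces $\|u_n\|_{H^1}\to 0$'' is evidently a typo for ``$\|u_n\|_{L^{p+1}}\to 0$ forces $\|u_n\|_{H^1}\to 0$'', which is what testing $E'(u_n)$ against $u_n$ gives and what you use later; (ii) your suggestion to ``package'' the key step by invoking \cite[Theorem 1.4.1]{Cao2021} is circular, as you yourself note, so the Lions-vanishing route must be the one actually carried out --- and there the mention of ruling out ``dichotomy'' is unnecessary, since in a Palais--Smale profile decomposition dichotomy is not excluded but simply produces further profiles in the iteration; (iii) the termination bound does not need the sharper claim $\|v^k\|_{H^1}^2\ge\|Q\|_{H^1}^2$ (true for this problem via the Nehari characterization, but requiring justification for possibly sign-changing profiles); the elementary bound $\|v\|_{H^1}\ge c(p,d)>0$ for any nontrivial solution, coming from $\|v\|_{H^1}^2=\|v\|_{L^{p+1}}^{p+1}\lesssim\|v\|_{H^1}^{p+1}$, already suffices.
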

 
 Thus,
inspired by   \cite{Wei} and \cite{Figalli2020}, we consider the $H^1$ stability  of the scalar field equation:
 \begin{equation}\label{cfe}\Delta u-u+|u|^{p-1}u \text{~~in~~} \mathbb{R}^d,\end{equation}
near a finite sum of ground states when $1< p<2^*-1$.  
For the real-valued case, we present our main result as follows:

\begin{theorem}\label{theorem1.2}
Let $1< p<2^*-1$.
For each $m\geq 1$,  there exists a large constant $\bar{R}=\bar{R}(m, d, p)>10$, such that if
   $u\in H^{1}(\mathbb{R}^d)$ satisfies
   $$\|u-\sum_{k=1}^{m}Q(\cdot+\widetilde{y}_k)\|_{H^1}\leq \frac{1}{\bar{R}},~\text{for~some~} \{\widetilde{y}_k\}\subset \mathbb{R}^d \text{~with~}  \min_{i\neq j}|\widetilde{y}_i-\widetilde{y}_j|\geq \bar{R},$$
then there exists $\{y_k\}_{k=1}^{m} \subset \mathbb{R}^d$ with $R:=\min_{i\neq j}|y_i-y_j|$ sufficiently large, such that
the following  estimation:
\begin{equation}\label{main}
\|u-\sum_{k=1}^{m}Q(\cdot+y_k)\|_{H^1} \leq C(m, d, p) F_{d, p}( \|\Delta u-u+|u|^{p-1}u\|_{H^{-1}})
\end{equation}
holds for some  constant $C(m,d, p)$ depending on $m, d$ and $p$.
Here,   the function $F_{d,p}:\mathbb{R}^+\to\mathbb{R}^+$ is defined for $s>0$ via
\begin{equation}\label{mainnn}
  F_{d,p}(s)=
  \begin{cases}
  s, & \mbox{if } p>2 , \mbox{or } p=2, d=4,5 \\
  (|\ln s| +1)^{\frac{1}{2}}s, & \mbox{if } p=2, d=1 ,\\
  |\psi(s)|^{-\frac{1}{4}}e^{- \psi(s)}, & \mbox{if } p=2, d=2,\\
   |\psi(s)|^{-1}e^{-\psi(s)}\ln^{\frac{1}{2}} (\psi(s)+2), & \mbox{if } p=2, d=3,\\
   |\psi(s)|^{(\frac{1}{4}-\frac{p}{2})(d-1)}e^{-\frac{p}{2}\psi(s)}, & \mbox{if }  1<p<2, d\geq 1
        \end{cases}
\end{equation}
where  $\psi$ is the inverse of the monotone function
$\phi (t)=t^{-(d-1)/2}e^{-t}$ in $(0, +\infty)$,
and we extend $F_{d,p}$ continuously at $0$ by setting $F_{d,p}(0)=0$.
Furthermore, we have
\[ R^{-\frac{d-1}{2}} e^{-R} \lesssim \|\Delta u-u+|u|^{p-1}u\|_{H^{-1}}, \quad \forall~ i\neq j.\]
\end{theorem}

We will construct examples to demonstrate the sharpness of estimate \eqref{main}-\eqref{mainnn}. 
The sharpness of the result for \( p > 2 \) and \( p = 2, d > 3 \) is straightforward, as one can simply consider \( u = (1 + \varepsilon)Q \). Therefore, we only focus on the case \( 1\leq p < 2 \) 
and \( p = 2, d = 1, 2, 3 \). 
In particular, we have the following result:

\begin{theorem}\label{sharp}
  The stability estimate \eqref{main}-\eqref{mainnn} is sharp when \( 1< p < 2 \) 
and \( p = 2, d = 1, 2, 3 \), in the sense that for each $m\geq 2$ and
  for all $R\geq \bar{R}(m,p)$ large enough, there exists $u=\sum\limits_{k=1}^{m} Q(\cdot + y_k)+\rho$ such that 
  \[  \min\limits_{i\neq j} |y_i-y_j|\geq R ,~~ \|\rho\|_{H^1}\leq \frac{1}{R} \]
and the following holds:
  \begin{equation}\label{mainsharp}
 \|u - \sum_{k=1}^{m} Q(\cdot + y_k)\|_{H^1} \gtrsim F_{d, p}( \|\Delta u - u + |u|^{p-1} u\|_{H^{-1}}).
  \end{equation}
  Moreover, the function \( u \) can be chosen to be non-negative.
\end{theorem}

 When $d=1$, Wei and Wu \cite[Section 5]{Wei2022} actually proved the same sharp  stability results for a  slightly different equation.

The function $ F_{d, p}$ arises from tedious computations of the interaction between different ground states.
In one dimension, the ground state can be explicitly expressed, and the integral calculations are straightforward. In contrast, the interactions between ground states in higher dimensions are more challenging to compute. As shown in Section 3.1, the asymptotic behavior depends not only on the dimension $d$ but also on the parameter $p$. 

In fact,
under the assumption of  Theorem \ref{theorem1.2},  since $u$ is close to $\sum_{k=1}^{m}Q(\cdot+\widetilde{y}_k)$, 
 the term $$ 
\inf\limits_{ \{y_i\}_{i=1}^{m}\subset \mathbb{R}^d} \|u - \sum\limits_{i=1}^{m} Q(\cdot + y_i)\|_{H^1}
$$
can be achieved by some
\[
\sigma := \sum_{i=1}^{m}  Q_i, \quad Q_i := Q(\cdot + y_i),
\]
with
$$
R:= \min_{i \neq j} |y_i - y_j| \text{~~large~enough~and } \max_{i} |y_i-\widetilde{y}_i| \text{~~small~enough}.
$$
If we write $u=\sigma+\rho$, by
taking derivative with the function $$G(y_1,\ldots, y_m)=\|u - \sum\limits_{i=1}^{m} Q(\cdot + y_i)\|_{H^1}^2,$$ we obtain
the following  orthogonality condition:
\begin{equation}
 \int_{\mathbb{R}^d}  (\rho \nabla Q_i + \nabla \rho \cdot \nabla^2 Q_i) dx  =p\int_{\mathbb{R}^d}   \rho   Q_i^{p-1} \nabla Q_i  dx=0,\quad \forall i.
\end{equation}
Moreover, 
by direct calculation, we obtain
\begin{equation}\label{rho}
\begin{aligned}
(-\Delta+1) \rho-p\sigma^{p-1} \rho
&= f+h +  N(\rho),
\end{aligned}
\end{equation}
where \begin{equation}\label{fh}
f:=(\sum\limits_{i=1}^{m}  Q_i )^p -\sum\limits_{i=1}^{m}  Q_i^p,\quad  h:=-\Delta u+u -|u|^{p-1}u
\end{equation}
 and
  \begin{align}\label{N}
  N(\rho):=|\sigma+\rho|^{p-1}(\sigma+\rho) -\sigma^p -p\sigma^{p-1} \rho.
  \end{align}
Thus, to provide a quantitative estimate for  $\|\rho\|_{H^1}$, we need to estimate the interaction term $f$.  The estimates in Section 3 yield that 
 $$\|f\|_{H^{-1}}\approx \|f\|_{L^2}\approx  F_{d,p}(R^{-\frac{d-1}{2}} e^{-R}).$$ 
Unlike the complicated pointwise estimates used in  \cite{Wei} and \cite[Section 5]{Wei2022}, we derive the following important estimation simply from the orthogonality condition and the non-degeneracy of ground states:  $$\|\rho\|_{H^1}\lesssim \|f\|_{H^{-1}} +\|h\|_{H^{-1}},$$
as shown in Proposition \ref{p1}.  On the other hand,  by testing the equation for $\rho$ with $\nabla Q_i$ we deduce another important estimate in Proposition \ref{prop3.}:
\[ R^{-\frac{d-1}{2}}e^{-R}\lesssim \|h\|_{H^{-1}}+\|\rho\|_{H^1}^{2}+ e^{-(1-\min\{1,\frac{p(p-1)}{4} \}) R}\|\rho\|_{H^1}^{\min\{p-1,1\}}\]
 Combining these two important estimates, we complete the proof of Theorem \ref{theorem1.2}.
 
However, since the estimate we need, $\int_{\mathbb{R}^d}f\nabla Q_i$, depends on the  position of  $y_i$ among these points $\{y_k\}$, our Lemma \ref{point} 
 is necessary, although it is trivial in one dimensional case.

\subsection{Structure of the paper}
The paper is organized as follows: \begin{enumerate} [$\bullet$] \item Section 2: Preliminaries and useful lemmas.

\item Section 3: The real-valued case. \begin{itemize} \item Section 3.1: Detailed calculations of the interaction between two ground states. 
\item Section 3.2: Derivation of the two main estimates and completion of the proof of Theorem \ref{theorem1.2}. 
    \item Section 3.3: Sharp examples to prove Theorem \ref{sharp}.
     \item Section 3.4: Proofs of Corollaries \ref{corollary1.1} and \ref{corollary1.2} based on Theorem \ref{theorem1.2}. \end{itemize}

\item Section 4: The complex-valued case. 
\begin{itemize}
 \item Section 4.1: In the complex-valued case, even for the simplest case 
$p=3$, we still need to impose restrictions on the phase difference, and it remains unclear whether this restriction can be removed, as demonstrated in Theorem \ref{theorem1.4}. 
\item Section 4.2: Focus on the single ground state case and  the application to the NLS equations, including the proof of Theorem \ref{theorem1.5} and Corollary \ref{corollary1.3}. \end{itemize} \end{enumerate}

\section{Preliminaries}
In this section, we first introduce some notations and basic properties of the ground state $Q$.
Let $H^1(\mathbb{R}^d)$  denote the  real-valued non-homogeneous Sobolev space, equipped with the inner product
$$( u, v)_{H^1}=\int_{\mathbb{R}^d}(\nabla u \cdot \nabla v+  uv) dx. $$
For complex-valued functions, we use the notation  $H^1(\mathbb{R}^d;\mathbb{C})$,  and the inner product is defined as
$$( u, v )_{H^1}=\text{Re} \int_{\mathbb{R}^d}(\nabla u \cdot \nabla \bar{v}+  u \bar{v}) dx. $$
The corresponding norm is given by : $$\|u\|_{H^1}:=\sqrt{ ( u, u )_{H^1}}.$$
We denote $H^{-1}$  as the dual space of $H^1$, with the dual pairing
$$(u,v)_{H^1,H^{-1}}:= \text{Re} \int_{\mathbb{R}^d} u\bar{v}dx, ~~\forall u\in H^1, v\in H^{-1} .$$
The critical Sobolev exponent is defined as
\begin{equation}
2^*:=
 \begin{cases}
   +\infty, & \mbox{if } d<3 \\
   \frac{2d}{d-2}, & \mbox{if } d\geq 3.
 \end{cases}
\end{equation}
The following property of the ground state is well known (  cf.  \cite[Section 1.3]{Nguyen2019}, \cite{Kwong1989}, \cite[Theorem 2]{GNN}, \cite[Chapter 4.2]{Ambrosetti}, \cite[Chapter 2]{Cao2021}, \cite[Appendix]{Wei2013}):
\begin{proposition}
Let $d\geq 1$ and $1<p<2^*-1$.  There exists a unique radial solution $Q$ to the equation
\[
\Delta Q - Q + Q^p = 0, \quad \text{in } \mathbb{R}^d, \quad Q > 0, \quad Q \in H^1(\mathbb{R}^d).
\]

The solution $Q$ satisfies the following properties:
  \begin{enumerate}
    \item For $|x|>1$, there exists a constant $c_Q>0$ such that
\begin{equation}\label{infi}
\begin{aligned}
     |Q(x) - c_Q |x|^{-\frac{d-1}{2}} e^{-|x|}|+ &|\nabla Q(x) - c_Q \frac{x}{|x|} |x|^{-\frac{d-1}{2}} e^{-|x|}| \\
    & \lesssim |x|^{-\frac{d+1}{2}} e^{-|x|}.
\end{aligned}
\end{equation}

    \item  There exists $\kappa>0$ such that
    the eigenvalue problem
  \begin{equation*}
    (-\Delta+1)\varphi=\lambda Q^{p-1} \varphi, \quad \varphi \in H^1(\mathbb{R}^d),
  \end{equation*}
admits a sequence of increasing eigenvalues
$\{\lambda_k\}$ such that $$\lambda_1=1, \lambda_2=\ldots=\lambda_{d+1}=p, \lambda_{d+2}=p+\kappa .$$
Here, $Q$ is the eigenfunction corresponding to eigenvalue $1$, and $\partial_{1}Q, \ldots, \partial_{d}Q$ are the eigenfunctions corresponding to the eigenvalue $p$. Therefore, for  $u\in H^{1}(\mathbb{R}^d)$, we have
\begin{equation}\label{gap}
\begin{aligned}
    \|u\|^2_{H^1} &\geq (p+\kappa)\int_{\mathbb{R}^d} Q^{p-1} u^2 \, dx \\
    &\quad - \frac{(p+\kappa-1)(u, Q)_{H^1}^2}{\|Q\|^2_{H^1}} 
    - \frac{\kappa}{p} \sum_{i=1}^{d} \frac{(u, \partial_i Q)_{H^1}^2}{\|\partial_i Q\|^2_{H^1}}.
\end{aligned}
\end{equation}
  \end{enumerate}
\end{proposition}

We conclude this section with the following two lemmas.
\begin{lemma}\label{lem3.1}
For $a_k\geq 0, k= 1, \ldots, m$, we have
\begin{equation*}\label{ppp}
\Big{|} \left(\sum_{k=1}^{m} a_k\right)^p - \sum_{k=1}^{m} a_k^p - p\sum\limits_{\substack{i,j\\i\neq j}} a_i^{p-1} a_j \Big{|} \lesssim
\begin{cases}
  \sum\limits_{\substack{i,j \\ i\neq j}} a_i^{\frac{p}{2}} a_j^{\frac{p}{2}} +   
  \sum\limits_{\substack{i,j,k\\ i\neq j , j\neq k, i\neq k}} a_i^{\frac{p-1}{2}} a_j^{\frac{p-1}{2}} a_k, & \mbox{if } 2< p\leq 3, \\[10pt]
  \sum\limits_{\substack{i,j \\ i\neq j}} a_i^{\frac{p}{2}} a_j^{\frac{p}{2}} +  
  \sum\limits_{\substack{i,j,k\\i\neq j , i\neq k}} a_i^{p-2} a_j a_k, & \mbox{if } p>3.
\end{cases}
\end{equation*}
Moreover,  for $1\leq p\leq 2 $, we have
\begin{equation*}
 \Big{|} \left(\sum_{k=1}^{m} a_k\right)^p - \sum_{k=1}^{m} a_k^p  \Big{|} \lesssim  \sum\limits_{\substack{i,j \\ i\neq j}} (a_i+a_j)^p-a_i^p-a_j^p.
\end{equation*}
\end{lemma}
\begin{proof}
The first inequality follows from an induction of  the following two elementary estimates for positive constants $a, b$ (cf. Cao-Peng-Yan \cite[Lemma 6.1.2]{Cao2021}):
  \begin{equation*}
  |(a+b)^{p}-a^p-b^p-pa^{p-1}b-pb^{p-1}a|\lesssim
    \begin{cases}
     a^{\frac{p}{2}}  b^{\frac{p}{2}}  , & \mbox{if } 2< p\leq 3  \\
      a^{\frac{p}{2}}  b^{\frac{p}{2}}+a^{p-2}b^2+ b^{p-2}a^2, &  \mbox{if } p>3,
    \end{cases}
     \end{equation*}
  and
      \begin{equation*}
  |(a+b)^{p}-a^p-b^p|\lesssim
    \begin{cases}
     a^{\frac{p}{2}}  b^{\frac{p}{2}}  , & \mbox{if } 1\leq p\leq 2  \\
      a^{p-1}b+ b^{p-1}a, &  \mbox{if } p>2.
    \end{cases}
     \end{equation*}
    The second inequality follows directly from Lemma A.6 in \cite{Wei}.
\end{proof}

 \begin{lemma}\label{point}
 Given a set of distinct points  $\{x_{k}\}_{k=0}^{m} \subset  \mathbb{R}^d$, there exists a constant  $c=c(m,d)>0$, depending only on the dimension $d$ and the number of points $m$, such that after applying a suitable rotation, translation, and reordering of the points  $\{x_{k}\}$, the following conditions hold:

$$x_0=0, ~~x_{k}^{1}>0,  ~~~ \forall k\geq 1~~ \text{  and  }~~ \frac{x_{k}^{1}}{|x_{k}|}>c.$$

Here $x_{k}^{1}$ denotes the first coordinate of $x_k$.
\end{lemma}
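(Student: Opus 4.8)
I would avoid any discussion of the convex hull of $\{x_k\}$ and its face structure — the natural but delicate route, where one has to show that some vertex of the hull has a tangent cone that is uniformly narrower than a half‑space — and instead select a single good direction by a measure‑theoretic pigeonhole. The idea is to pick a unit vector $v$ that is \emph{quantitatively transverse} to every difference vector $x_j-x_k$, let $x_0$ be the minimizer of $\langle\,\cdot\,,v\rangle$ over the points, and then rotate $v$ onto $e_1$ and translate $x_0$ to the origin.

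First I would fix the constant. For a fixed unit vector $e$ the measure $\sigma\big(\{v\in S^{d-1}:|\langle v,e\rangle|\le t\}\big)$ of the fattened equator is independent of $e$ by rotation invariance and tends to $0$ as $t\to0^+$ (dominated convergence, or an explicit one–variable integration in the polar angle); hence one can choose $c=c(m,d)>0$ with
\[
\binom{m+1}{2}\,\sigma\big(\{v\in S^{d-1}:|\langle v,e\rangle|\le c\}\big)<\sigma(S^{d-1}).
\]
Since the $x_k$ are distinct, for each pair $j\neq k$ the set $E_{jk}:=\{v\in S^{d-1}:|\langle v,x_j-x_k\rangle|\le c\,|x_j-x_k|\}$ equals, after normalizing $x_j-x_k$, exactly such a fattened equator, so $\sigma(E_{jk})<\sigma(S^{d-1})/\binom{m+1}{2}$. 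Therefore $\bigcup_{j<k}E_{jk}\neq S^{d-1}$, and I pick $v$ in the complement, so that $|\langle v,x_j-x_k\rangle|>c\,|x_j-x_k|$ for all $j\neq k$. (In the one–dimensional case $S^0$ consists of two points and each $E_{jk}$ is empty once $c<1$, so one simply takes $v=+1$.)

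With $v$ fixed, the numbers $\langle v,x_k\rangle$ are pairwise distinct, so $k\mapsto\langle v,x_k\rangle$ has a unique minimizer; relabel the points so that it is $x_0$. Then for every $k\ge1$ we have $\langle v,x_k-x_0\rangle>0$, which together with transversality gives $\langle v,x_k-x_0\rangle>c\,|x_k-x_0|$. Applying the rotation that takes $v$ to $e_1$ and translating by $-x_0$ sends $x_k$ to $y_k$ with $y_0=0$ and, for $k\ge1$,
\[
y_k^{1}=\langle y_k,e_1\rangle=\langle x_k-x_0,v\rangle>c\,|x_k-x_0|=c\,|y_k|>0,
\]
which is the assertion, with $c$ depending only on $\binom{m+1}{2}$ and $d$. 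The only genuine decision in this argument is the choice of strategy; after that every step is elementary, and the sole technical point to verify is the vanishing of the fattened–equator measure as $c\to0$.
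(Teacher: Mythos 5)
Your proof is correct and follows essentially the same strategy as the paper: a measure-theoretic pigeonhole on the space of directions to find one that is quantitatively transverse to all pairwise differences $x_j-x_k$, followed by taking the extremal point along that direction and applying a rotation and translation. You parameterize directions by $S^{d-1}$ with surface measure and handle all dimensions uniformly, while the paper parameterizes lines by an angle cube $[0,\pi)^{d-1}$ and separates the collinear case; these are cosmetic differences of the same argument.
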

\begin{proof}
We divide the proof into two cases:

\textbf{Case 1: All points lie on a line.}

If all the points \( \{x_k\} \) are collinear, the result is immediate.

\textbf{Case 2: The points are not all collinear.}

In \( \mathbb{R}^d \) with \( d > 1 \), the lines through the origin can be parameterized by the angles \( \theta = (\theta_1, \dots, \theta_{d-1}) \in [0, \pi)^{d-1} \). We denote the corresponding line by \( L_\theta \). For a given non-zero vector \( \vec{a} \), we define \( \vec{a}_\theta \) as the projection of \( \vec{a} \) onto the line \( L_\theta \).

For a small \( \delta > 0 \), define the set \( A_{\vec{a}, \delta} \) as:
\[
A_{\vec{a}, \delta} := \left\{ \theta \in [0, \pi)^{d-1} : \frac{|\vec{a}_\theta|}{|\vec{a}|} \leq \delta \right\}.
\]
We then define the measure of this set:
\[
A(\delta) := |A_{\vec{a}, \delta}|.
\]
The quantity \( A(\delta) \) is independent of \( \vec{a} \) and satisfies:
\[
A(\delta) \to 0 \quad \text{as} \quad \delta \to 0.
\]

Now, fix \( \delta > 0 \) small enough such that \( m^2 A(\delta) < \frac{1}{1000} \). Under this condition, there must exist a line \( L_{\tau} \) such that \( \tau \notin \bigcup_{i,j} A_{\vec{x_i} - \vec{x_j}, \delta} \).

If we project all the points \( \{x_k\}_{k=0}^m \) onto the line \( L_\tau \), it holds that:
\[
\frac{|(\vec{x_i} - \vec{x_j})_\tau|}{|\vec{x_i} - \vec{x_j}|} > \delta
\]
for all distinct pairs \( i \neq j \).

This allows us to apply the result from \textbf{Case 1} (where the points are collinear) after projecting the points onto \( L_\tau \). Thus, the conditions in the lemma hold, and the proof is complete.

\end{proof}

\section{The real-valued case}
\subsection{Interaction Integral Estimates}
As a consequence of \eqref{infi}, we have the following useful estimations:
\begin{lemma}\label{esti}
For $|y|>2, \alpha>\beta>0$, it holds that
\begin{equation}
\int_{\mathbb{R}^d} Q^{\alpha}(x)  Q^{\beta}(x+y)dx \approx |y|^{-\frac{\beta(d-1)}{2}} e^{-\beta|y|}.
\end{equation}
\end{lemma}
\begin{proof}
On the one hand,
\begin{equation}
\begin{aligned}
\int_{\mathbb{R}^d} Q^{\alpha}(x)  Q^{\beta}(x+y)dx &\geq \int_{|x|<1} Q^{\alpha}(x)  Q^{\beta}(x+y)dx \\&\gtrsim |y|^{-\frac{\beta(d-1)}{2}} e^{-\beta|y|} .
\end{aligned}
\end{equation}
On the other hand,
since
\[(1+|x|)^{-\frac{(d-1)}{2}}  (1+|x+y|)^{-\frac{(d-1)}{2}} \lesssim  (1+|y|)^{-\frac{(d-1)}{2}}.\]
We have
\begin{equation}\label{es}
\begin{aligned}
   Q(x)  Q(x+y)
   & \lesssim (1+|x|)^{-\frac{(d-1)}{2}}  (1+|x+y|)^{-\frac{(d-1)}{2}}e^{- |x|-|x+y|} 
   \\&\lesssim (1+|y|)^{-\frac{(d-1)}{2}} e^{-|y|}.
   \end{aligned}
   \end{equation}
Thus,
\begin{equation}
\begin{aligned}
\int_{\mathbb{R}^d} Q^{\alpha}(x)  Q^{\beta}(x+y)dx &\lesssim
(1+|y|)^{-\frac{\beta(d-1)}{2}}e^{-\beta |y|}\int_{\mathbb{R}^d} Q^{\alpha-\beta}(x)dx \\&
\lesssim(1+|y|)^{-\frac{\beta(d-1)}{2}}e^{-\beta |y|}.
\end{aligned}
\end{equation}
\end{proof}

When $\alpha=\beta=2$, the weak interaction between two ground states depends on the dimension, which is the main reason why Theorem \ref{theorem1.2} depends on the dimension when $p=2$.
\begin{lemma}\label{lem2-2}
For $|y|>2$, we have
\begin{equation*}
\int\limits_{\mathbb{R}^d} Q(x)^2 Q(x+y )^2 dx  \approx
  \begin{cases}
   |y| e^{-2|y|}  ,&\text{if }  d=1;\\
    |y|^{-\frac{1}{2}} e^{-2|y|}  ,&\text{if }  d=2;\\
   |y|^{-2} e^{-2|y|} (\ln |y|) ,&\text{if }  d=3;\\
  |y|^{-(d-1)} e^{-2|y|} ,&\text{if } d>3.
  \end{cases}
  \end{equation*}
\end{lemma}
\begin{proof}
Without loss of generality,  set $R=|y|$ and assume $y=Re_1$, where $e_1=(1,0,\ldots,0)$.
For $d=1$, by direct calculation, we have
\begin{equation}\label{1111}
\int_{-\infty}^{+\infty} Q(x)^2 Q(x+R )^2 dx
\approx \int_{-\infty}^{+\infty} e^{-2|x|-2|x+R|} dx
  \approx R e^{-2R} .
\end{equation}

When $d\geq 2$ ,
by symmetry, we obtain
\begin{equation} \int_{\mathbb{R}^d } Q(x )^2 Q(x+Re_1 )^2 dx=2\int_{\{x_1>-\frac{R}{2} \}} Q(x )^2 Q(x+Re_1 )^2 dx.\end{equation}
In the region $\{x^1>-\frac{R}{2} \}\cap B_{\frac{2}{3}R}^c$, where $x^{1}$ denotes the first coordinate of $x$, we have $|x|\geq \frac{2R}{3}$ and $|x+R e_1|\geq \frac{R}{2}$, then
\[  e^{- |x|-|x+R e_1|} \leq   e^{-\frac{1}{7} |x| -\frac{6}{7}\cdot \frac{2R}{3}-\frac{R}{2} } \leq e^{-\frac{1}{7} |x|-\frac{15}{14}R}. \]
This implies that
\begin{equation}
\begin{aligned}
  \int_{\{x^1>-\frac{R}{2} \}\cap B_{\frac{2R}{3}}^c} & Q(x)^2 Q(x+Re_1 )^2 dx\\&
  \lesssim   \int_{\{x^1>-\frac{R}{2} \}\cap B_{\frac{2R}{3}}^c}   e^{- 2|x|-2|x+R e_1|} dx\\&
  \lesssim 
  e^{-\frac{15}{7}R} \int_{\mathbb{R}^d} e^{-\frac{2}{7}|x|}  dx
  =o_R( R^{-(d-1)} e^{-2R}).
  \end{aligned}
\end{equation}
While it is easy to see that
\[  \int_{B_{1} }  Q(x)^2 Q(x+Re_1 )^2 dx\approx R^{-(d-1)} e^{-2R}.\]
Now
using polar coordinates and observing that the  function $$ (1+|x|)^{-(d-1)} (1+|x+R e_1 |)^{-(d-1)} e^{-2 |x|-2|x+R e_1|}$$
depends only on the radial distance $r=|x|$ and the angle $\theta=\arccos\frac{x_1}{|x|}$, we obtain
\begin{equation}\label{4.151}
\begin{aligned}
\int_{B_{\frac{2R}{3}}\setminus B_{1} }&  Q(x )^2 Q(x+Re_1 )^2 dx \\
&\approx \int_{B_{\frac{2R}{3}}\setminus B_{1} } |x|^{-(d-1)} |x+R e_1 |^{-(d-1)} e^{-2 |x|-2|x+R e_1|} dx\\
&\approx \int_1^{\frac{2R}{3}} \int_0^\pi  (r^2 +2Rr\cos\theta +R^2 )^{-\frac{d-1}{2}} e^{-2r-2\sqrt{r^2+2Rr\cos\theta +R^2 }} (\sin\theta)^{d-2} d\theta dr\\
&\approx \int_1^{\frac{2R}{3}} \int_{\frac{\pi}{2}}^{\pi}  (r^2 +2Rr\cos\theta +R^2 )^{-\frac{d-1}{2}} e^{-2r-2\sqrt{r^2+2Rr\cos\theta +R^2 }} (\sin\theta)^{d-2} d\theta dr\\
&\approx \int_1^{\frac{2R}{3}} \int_{R-r}^{\sqrt{R^2+r^2}}  \frac{1}{Rr} (\sin\theta)^{d-3}t^{2-d} e^{-2r-2t} dtdr\\
&\approx \int_1^{\frac{2R}{3}} \frac{1}{R^{d-1}r}  e^{-2R} \int_{0}^{r+\sqrt{R^2+r^2}-R} (\sin\theta)^{d-3} e^{-2s}dsdr.
\end{aligned}
\end{equation}
Here, we have made the substitution $t=\sqrt{r^2+2Rr\cos\theta +R^2 } $ and $s=t+r-R.$   Denote $$r'=\frac{r}{R} \in (0, \frac{2}{3}], \quad s'=\frac{s}{R} \in [0,\frac{3r'}{2}] \quad (\mbox{since } \sqrt{R^2+r^2}-R\leq \frac{r}{2}).$$
 By direct calculation,  we obtain:
\begin{equation}
  \begin{aligned}
  \sin^2 \theta&=1-\Big{(}\frac{(s'+1-r')^2-r'^2 -1 }{2r'}\Big{)}^2\\&
  =1-\Big{(}1-\frac{s'^2 +2s' (1-r')}{2r'}\Big{)}^2 
  \\&
  =\frac{s'}{r'}(s'+2-2r')[1-\frac{s'}{4r'}(s'+2-2r')]\\&
  \approx \frac{s'}{r'}  \quad ( \mbox{since }\frac{1}{4}\leq 1-\frac{s'}{4r'}(s'+2-2r')\leq 1 )
  \end{aligned}
\end{equation}
Thus, $\sin\theta\approx \sqrt{\frac{s}{r}}$ , and \eqref{4.151} yields
\begin{equation}\label{4.16}
\begin{aligned}
\int_{B_{\frac{2}{3}R}\setminus B_{1} } Q(x )^2 Q(x+Re_1 )^2 dx
&\approx R^{-(d-1)} e^{-2R} \int_1^{\frac{2}{3}R}  \int_{0}^{r+\sqrt{R^2+r^2}-R} r^{-1} (\frac{s}{r})^{\frac{d-3}{2}} e^{-2s}dsdr\\
&\approx  R^{-(d-1)} e^{-2R} \int_{1}^{\frac{2}{3}R} r^{-\frac{d-1}{2}} dr \int_0^{r+\sqrt{R^2+r^2}-R} e^{-2s} s^{\frac{d-3}{2}} ds\\
&\approx 
\begin{cases}
  R^{-\frac{1}{2}} e^{-2R}, & \mbox{if } d=2, \\
  R^{-2} e^{-2R}\ln R, & \mbox{if } d=3, \\
R^{-(d-1)} e^{-2R} , & \mbox{if } d>3.
\end{cases}
\end{aligned}
\end{equation}
Here, we have used the fact that $$\int_0^{r+\sqrt{R^2+r^2}-R} e^{-2s} s^{\frac{d-3}{2}} ds\approx1$$  since  $ e^{-2s} s^{\frac{d-3}{2}} \in L^1(0,+\infty)$.
Moreover, we note that   \begin{equation*}
\begin{aligned}
  \int_{B_{\frac{2}{3}R}\setminus B_{1} } Q(x )^2 Q(x+Re_1 )^2 dx &\leq   \int_{\mathbb{R}^d } Q(x )^2 Q(x+Re_1 )^2 dx\\& \leq
 2\int_{\{x_1>-\frac{R}{2} \}} Q(x )^2 Q(x+Re_1 )^2 dx\\&
 \leq 2\Big{(} \int_{B_{\frac{2}{3}R} } Q(x )^2 Q(x+Re_1 )^2 dx\\&  +   \int_{\{x^1>-\frac{R}{2} \}\cap B_{\frac{2R}{3}}^c}  Q(x)^2 Q(x+Re_1 )^2 dx\Big{)}.
  \end{aligned} 
\end{equation*}
Therefore, combining equations \eqref{1111} to \eqref{4.16}, we  complete the proof.
\end{proof}

Next, we present some estimates for the term $$ \|(Q + Q(\cdot + y))^p - Q^p - Q(\cdot + y)^p\|_{L^2}. $$ 
The case 
$p=2$ is covered by Lemma \ref{lem2-2}. 
When $p>2$, it is easy to check that $$\|(Q + Q(\cdot + y))^p - Q^p - Q(\cdot + y)^p\|_{L^2} \approx  |y|^{-\frac{d-1}{2}}e^{-|y|}.$$
Since this result is not needed in the rest of the proof, we omit it.
We now consider the case $1<p<2$:

\begin{lemma}\label{lem1-4}
For $1<p<2$, and for $|y|$ large enough, we have
  \begin{equation}
\int_{\mathbb{R}^d}  |(Q(x)+Q(x+y))^p-Q(x)^p-Q(x+y)^p|^2 dx\approx |y|^{(\frac{1}{2}-p)(d-1)}e^{-p|y|}.
  \end{equation}
\end{lemma}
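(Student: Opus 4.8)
The plan is to split the integral according to which of the two bump functions dominates, using the symmetry $y \mapsto -y$ to reduce to the half-space $\{x^1 > -\tfrac{R}{2}\}$ where $Q(x) \geq Q(x+y)$ (setting $R = |y|$, $y = Re_1$ after a rotation). On this region, write $a = Q(x)$ and $b = Q(x+y)$ with $b \leq a$; then by the elementary inequality $|(a+b)^p - a^p - b^p| \lesssim a^{p-1}b$ valid for $1 < p < 2$ when $b \leq a$ (this is the $p>1$ branch of the second estimate in Lemma \ref{lem3.1} with the roles arranged so that the smaller term carries the lower power — indeed $|(a+b)^p - a^p - b^p| \leq p\, a^{p-1} b + p\, b^{p-1} a \lesssim a^{p-1}b$ since $b \leq a$). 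Squaring gives the pointwise bound $|(a+b)^p - a^p - b^p|^2 \lesssim Q(x)^{2(p-1)} Q(x+y)^2$, so the upper bound reduces to estimating $\int Q^{2(p-1)}(x) Q^2(x+y)\,dx$.

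Next I would invoke Lemma \ref{esti} with $\alpha = 2(p-1)$ and $\beta = 2$: since $1 < p < 2$ we have $2(p-1) < 2$, so here $\beta > \alpha$, and Lemma \ref{esti} (after swapping the roles of the two factors via the change of variables $x \mapsto -x-y$, which replaces $\int Q^\alpha(x)Q^\beta(x+y)$ by $\int Q^\beta(x)Q^\alpha(x+y)$) gives $\int Q^{2(p-1)}(x)Q^2(x+y)\,dx \approx |y|^{-\frac{\alpha(d-1)}{2}} e^{-\alpha|y|} = |y|^{-(p-1)(d-1)} e^{-2(p-1)|y|}$. But $2(p-1) = p - (2-p) < p$, so $e^{-2(p-1)|y|}$ is much larger than $e^{-p|y|}$; this shows the crude pointwise bound is lossy, and the true decay rate $e^{-p|y|}$ in the claimed answer $|y|^{(\frac{1}{2}-p)(d-1)} e^{-p|y|}$ tells us that the $L^2$ norm is governed by the region where $Q(x)$ and $Q(x+y)$ are \emph{comparable}, i.e. near the midpoint hyperplane $\{x^1 = -R/2\}$, rather than by the far tails. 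So the real argument must be region-by-region, exactly as in the proof of Lemma \ref{lem2-2}: decompose $\{x^1 > -R/2\}$ into $B_1$ (contributing $\approx R^{-(d-1)}e^{-p|y|}$, negligible), the far region $B_{2R/3}^c$ (exponentially smaller, as in Lemma \ref{lem2-2}), and the annulus $B_{2R/3}\setminus B_1$. On the annulus, use $|(a+b)^p - a^p - b^p|^2 \approx a^{2(p-1)}b^2$ when $b \lesssim a$ — but crucially, near $x^1 = -R/2$ both are comparable, and one should instead bound $|(a+b)^p-a^p-b^p| \lesssim \min(a,b)\max(a,b)^{p-1} \approx (ab)^{p/2}\cdot(\max/\min)^{(p-2)/2}$, or more simply note $|(a+b)^p - a^p - b^p|^2 \lesssim (Q(x)Q(x+y))^p \cdot (\text{bounded correction})$ on the comparable region; then $\int (Q(x)Q(x+y))^p \approx \int (1+|x|)^{-\frac{p(d-1)}{2}}(1+|x+y|)^{-\frac{p(d-1)}{2}} e^{-p|x|-p|x+y|}$, and run the same polar-coordinate computation as \eqref{4.151}–\eqref{4.16} with exponent $p$ in place of $2$: substitute $t = \sqrt{r^2 + 2Rr\cos\theta + R^2}$, $s = t + r - R$, use $\sin\theta \approx \sqrt{s/r}$, and arrive at $R^{-(p-1)(d-1) - \text{something}} e^{-pR} \int_1^{2R/3} r^{-(p-1)(d-1)/\cdots}\,dr$ — the $r$-integral converges since $1<p<2$ makes the relevant exponent $> 1$ in absolute value in the right direction, producing the clean power $R^{(\frac12 - p)(d-1)}$. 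For the lower bound, restrict to a fixed-width slab around $\{x^1 = -R/2\}$ intersected with a paraboloidal neighborhood and check the integrand is $\gtrsim$ the claimed quantity there.

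The main obstacle is identifying the correct pointwise surrogate for $|(Q(x)+Q(x+y))^p - Q(x)^p - Q(x+y)^p|^2$ that is \emph{sharp up to constants} uniformly across the transition from the tail regime ($b \ll a$, where $\approx a^{2(p-1)}b^2$) to the balanced regime ($a \approx b$, where $\approx (ab)^p$), since naively using the tail bound everywhere overcounts the decay rate and gives $e^{-2(p-1)|y|}$ instead of $e^{-p|y|}$. The resolution is that on $\{x^1 > -R/2\}$ one has $Q(x+y) \lesssim Q(x)$, hence $|(a+b)^p-a^p-b^p| \lesssim a^{p-1}b$ pointwise, but when one computes $\int a^{2(p-1)}b^2$ via polar coordinates the \emph{dominant contribution} to that integral \emph{itself} comes from $|x| \approx R/2$ where $a \approx b$, and there $a^{2(p-1)}b^2 \approx a^p b^p$ anyway — so the crude bound is actually not lossy once integrated, provided one tracks the polar-coordinate weights honestly; so really I expect $\int Q^{2(p-1)}(x)Q^2(x+y)\,dx$, computed carefully, to already equal $\approx |y|^{(\frac12-p)(d-1)} e^{-p|y|}$ (the earlier application of Lemma \ref{esti} with $\beta>\alpha$ does not apply because the tail-region heuristic is overridden by the balanced region), and the bulk of the work is the careful annulus computation mirroring \eqref{4.151}–\eqref{4.16}. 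I would double-check the final exponent arithmetic against the two known endpoints: as $p \to 2^-$ it should match the $d$-dependent $p=2$ answer of Lemma \ref{lem2-2} only in the $d>3$ case (the low-dimensional logarithmic/fractional corrections arise precisely from the borderline convergence of the $s$-integral $\int_0^\infty e^{-ps}s^{(d-3)/2}\,ds$ and the $r$-integral, which for $1<p<2$ are no longer borderline), and the power $(\frac12-p)(d-1)$ should be negative and decreasing in $p$, consistent with $F_{d,p}$ in \eqref{mainnn}.
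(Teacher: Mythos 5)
Your plan is essentially the paper's own proof: after the symmetry reduction to the half-space $\{x^1>-\frac R2\}$, replace the integrand by the surrogate $Q(x)^{2(p-1)}Q(x+y)^2$ and evaluate that \emph{half-space} integral by the polar-coordinate computation of Lemma \ref{lem2-2}, whose dominant contribution comes from the balanced region $|x|\approx \frac R2$ near the midpoint and yields $R^{(\frac12-p)(d-1)}e^{-pR}$; this is exactly \eqref{3.111}--\eqref{3.166}, so your final self-correction lands on the right argument. Two steps need repair, though. First, your justification of the pointwise bound is backwards: for $0\le b\le a$ and $1<p<2$ one has $b^{p-1}a=(a/b)^{2-p}\,a^{p-1}b\ \ge\ a^{p-1}b$, so the claim $p\,a^{p-1}b+p\,b^{p-1}a\lesssim a^{p-1}b$ is false as stated. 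What is true, and what the paper records in \eqref{3.111} (and what you need \emph{two-sidedly} for the lower bound), is $(a+b)^p-a^p-b^p\approx a^{p-1}b$ for $0\le b\le a$, which follows from checking $(1+t)^p-1-t^p\approx t$ uniformly for $t\in[0,1]$. Second, your detour through Lemma \ref{esti} and the conclusion that ``the crude pointwise bound is lossy'' conflates two different integrals: Lemma \ref{esti} correctly gives $\int_{\mathbb{R}^d}Q^{2(p-1)}(x)Q^2(x+y)\,dx\approx|y|^{-(p-1)(d-1)}e^{-2(p-1)|y|}$ for the \emph{full-space} integral, which is dominated by a neighborhood of $x=-y$; the half-space restriction removes precisely that region, and only then is the surrogate integral of the claimed size $R^{(\frac12-p)(d-1)}e^{-pR}$. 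So the surrogate is sharp for the reason of the domain restriction, not because Lemma \ref{esti} ``does not apply.''

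Beyond that, the proposal stops short of the actual work: the exponent bookkeeping you leave as placeholders is the content of \eqref{3.166} (the $r$-integrand carries $e^{2(2-p)r}$, so the integral concentrates at $r\approx\frac R2$ and produces $R^{(\frac12-p)(d-1)}e^{-pR}$), and the outer region of the half-space beyond $B_{R/2}$ needs the angular splitting the paper performs to see it is $o_R$ of the main term. For the lower bound in $d\ge2$, note that a bounded neighborhood of the midpoint only gives $R^{-p(d-1)}e^{-pR}$; you must integrate over the paraboloidal tube $\{|x|+|x+y|\le R+C\}$ near the midpoint slab, whose transverse width $\approx\sqrt R$ supplies the missing factor $R^{\frac{d-1}{2}}$ — your phrase ``paraboloidal neighborhood'' is the right idea, but it does the work and should be made explicit.
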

\begin{proof}
Without loss of generality, we assume $y=Re_1$. Then
\begin{equation}\label{3.111}
\begin{aligned}
(Q(x)+&Q(x+Re_1))^p-Q(x)^p-Q(x+Re_1)^p \\
&\approx
\begin{cases}
 Q(x)^{p-1}Q(x+Re_1), & \mbox{if } x^1>-\frac{R}{2} ,\\
 Q(x)Q(x+Re_1)^{p-1}, & \mbox{if } x^1 \le -\frac{R}{2}.
 \end{cases}
  \end{aligned}
\end{equation}
By the symmetry,
\begin{equation}
\begin{aligned}
\int_{\mathbb{R}^d} &|(Q(x)+Q(x+Re_1))^p-Q(x)^p-Q(x+Re_1)^p|^2 dx\\&=2\int_{\{x^1>-\frac{R}{2}\}}Q(x)^{2(p-1)}Q^2(x+Re_1) dx\\&
=2\int_{B_{\frac{R}{2}}}Q(x)^{2(p-1)}Q^2(x+Re_1) dx +2\int_{\{x^1>-\frac{R}{2}\}\cap B_\frac{R}{2}^c}Q(x)^{2(p-1)}Q^2(x+Re_1)dx .
\end{aligned}
\end{equation}
Denote $\alpha$ as the angle between the negative $x^1$-axis and the point $x$,  
so that $x^1=-|x|\cos \alpha$. 
 Set $$A:=\{x^1>-\frac{R}{2}\}\cap B_\frac{R}{2}^c \cap \{|\alpha| \le R^{-\frac{d}{d+1}} \}.$$  In region $A$, we have
 $$\text{dist}(x,  B_\frac{R}{2} \cap \{|\alpha| \le R^{-\frac{d}{d+1}}\})\leq \frac{R}{2\cos \alpha}-\frac{R}{2}\lesssim R\alpha^2 \lesssim R^{-\frac{d-1}{d+1}},$$ 
 Thus,
$$|A|\lesssim
\int_{\frac{R}{2}}^{\frac{R}{2}+CR^{-\frac{d-1}{d+1}}}   r^{d-1} dr \int_{0}^{R^{-\frac{d}{d+1}}} (\sin\alpha)^{d-2}  d\alpha\lesssim
 R^{\frac{d(d-1)}{d+1}}R^{-\frac{d(d-1)}{d+1}}  \lesssim 1.$$
It follows that
\begin{equation}
\begin{aligned}
\int_A Q(x)^{2(p-1)} Q^2(x+Re_1) dx &\lesssim \int_A R^{-p(d-1)}  e^{-pR} dx \\ &\lesssim  R^{-p(d-1)} e^{-pR} .
\end{aligned}
\end{equation}
Additionally, in the region $(\{x^1>-\frac{R}{2}\}\cap B_\frac{R}{2}^c) \setminus A$,  we have $$|x+Re_1 |\ge \sqrt{(\frac{R}{2})^2 +(\frac{R}{2}\sin R^{-\frac{d}{d+1}})^2}\geq  \frac{R}{2}+\frac{R^{\frac{1}{d+1}}}{8}.$$
Therefore, 
\begin{equation}
\begin{aligned}
\int_{(\{x^1>-\frac{R}{2}\}\cap B_\frac{R}{2}^c) \setminus A } & Q(x)^{2(p-1)} Q^2(x+Re_1) dx\\
 &\lesssim \int_{B_\frac{R}{2}^c } R^{-p(d-1)}  e^{-2|x|(p-1)} e^{-R-\frac{R^{\frac{1}{d+1}}}{4} } dx \\
&=o_R(R^{-p(d-1)} e^{-pR} ) .
\end{aligned}
\end{equation}

On the other hand,
\begin{equation}
\begin{aligned}
\int_{B_{\frac{R}{2}}}&Q(x)^{2(p-1)}Q^2(x+Re_1) dx\\
&=\int_{B_{1}}Q(x)^{2(p-1)}Q^2(x+Re_1) dx+\int_{B_{\frac{R}{2}}\setminus B_{1}}Q(x)^{2(p-1)}Q^2(x+Re_1) dx\\&
\approx R^{-(d-1)}e^{-2R}+\int_{B_{\frac{R}{2}}\setminus B_{1}}Q(x)^{2(p-1)}Q^2(x+Re_1) dx.
\end{aligned}
\end{equation}
And using the  same arguments in  \eqref{4.151}, we deduce that
\begin{equation}\label{3.166}
\begin{aligned}
&\int_{B_{\frac{R}{2}}\setminus B_{1}}Q(x)^{2(p-1)}Q^2(x+Re_1) dx\\&
\approx \int_{B_{\frac{R}{2}}\setminus B_{1}} |x|^{-(p-1)(d-1)} |x+R e_1 |^{-(d-1)} e^{-2(p-1) |x|-2|x+R e_1|} dx\\&
\approx\int_1^{\frac{R}{2}} \int_{\frac{\pi}{2}}^\pi r^{(2-p)(d-1)} (r^2 +2Rr\cos\theta +R^2 )^{-\frac{d-1}{2}} e^{-2(p-1)r-2\sqrt{r^2+2Rr\cos\theta +R^2 }} (\sin\theta)^{d-2} d\theta dr\\
&\approx \int_1^{\frac{R}{2}} \int_{R-r}^{\sqrt{R^2+r^2}}  \frac{r^{(2-p)(d-1)-1}}{R} (\sin\theta)^{d-3}t^{2-d} e^{-2(p-1)r-2t} dtdr\\
&\approx \int_1^{\frac{R}{2}} \frac{r^{(2-p)(d-1)-1}}{R^{d-1}}  e^{-2R} e^{2(2-p)r} \int_{0}^{r+\sqrt{R^2+r^2}-R} (\sin\theta)^{d-3}  e^{-2s}dsdr\\
&\approx \int_1^{\frac{R}{2}} \frac{r^{(2-p)(d-1)-1-\frac{d-3}{2}}}{R^{d-1}}  e^{-2R} e^{2(2-p)r} dr
\\
&\approx R^{(\frac{1}{2}-p)(d-1)}e^{-pR}.
\end{aligned}
\end{equation}
Combine \eqref{3.111}-\eqref{3.166} we complete the proof.
\end{proof}


Finally, we provide estimates for the interaction integrals between the gradient of a ground state and another ground state.

\begin{lemma}\label{lemmay}
For $|y|>1, p>1$, there exists a constant $\bar{c}>0$ such that
\begin{equation}
\Big{|}\int_{\mathbb{R}^d} Q^{p-1}(x)  \nabla Q(x) Q(x+y)dx-\bar{c}\frac{y}{|y|}|y|^{-\frac{d-1}{2}}e^{-|y|} \Big{|} \lesssim |y|^{-\frac{d+1}{2}}e^{-|y|}.
\end{equation}
\end{lemma}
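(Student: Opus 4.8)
The plan is to reduce this gradient--interaction integral to the scalar interaction integral already understood via Lemma \ref{esti}, by replacing $\nabla Q(x)$ with its asymptotic profile. Write $R = |y|$ and, without loss of generality (after a rotation), take $y = R e_1$. Split the domain as $\mathbb{R}^d = B_1 \cup B_1^c$. On $B_1$ the integrand is bounded by a constant times $Q(x+y)$, which on this ball is comparable to $R^{-\frac{d-1}{2}} e^{-R}$ up to a multiplicative $O(1)$ factor; more precisely one writes $\int_{B_1} Q^{p-1}(x)\nabla Q(x) Q(x+y)\, dx = \big(\int_{B_1} Q^{p-1}(x)\nabla Q(x)\, dx\big) R^{-\frac{d-1}{2}} e^{-R} (1+o_R(1)) + O(R^{-\frac{d+1}{2}} e^{-R})$, using the expansion of $Q(x+y)$ for $|x|<1$, $|y|$ large. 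The constant $\int_{B_1} Q^{p-1}\nabla Q\, dx$ contributes to $\bar c$; note it points in the $e_1 = y/|y|$ direction by the radial symmetry of $Q$ (the components orthogonal to $e_1$ integrate to zero after reflecting $x \mapsto$ its mirror image across the $x^1$-axis, because $Q^{p-1}\nabla Q$ transforms covariantly and $Q(x+Re_1)$ is invariant under that reflection — wait, this needs the remark below).

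For the region $B_1^c$, use the pointwise bound \eqref{infi}: $\nabla Q(x) = c_Q \frac{x}{|x|} |x|^{-\frac{d-1}{2}} e^{-|x|} + O(|x|^{-\frac{d+1}{2}} e^{-|x|})$. Substituting, the main term is $c_Q \int_{B_1^c} Q^{p-1}(x) \frac{x}{|x|} |x|^{-\frac{d-1}{2}} e^{-|x|} Q(x+y)\, dx$, and the error term is controlled by $\int_{B_1^c} Q^{p-1}(x) |x|^{-\frac{d+1}{2}} e^{-|x|} Q(x+y)\, dx$. For the error: since $Q^{p-1}(x)|x|^{-\frac{d+1}{2}}e^{-|x|} \lesssim Q^{p-1+\epsilon}(x) \cdot |x|^{-1}$ for any small $\epsilon>0$ on $B_1^c$ (absorbing the polynomial loss into a sliver of exponential decay), we have $|x|^{-\frac{d+1}{2}}e^{-|x|} \lesssim |y|^{-1}$-type gain — more carefully, redo the computation in Lemma \ref{esti}'s proof but carrying one extra factor of $(1+|x|)^{-1}$, which after the change of variables produces the extra factor $R^{-1}$ relative to $R^{-\frac{d-1}{2}}e^{-R}$, giving $O(R^{-\frac{d+1}{2}}e^{-R})$ as claimed. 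The main term, $c_Q \int_{B_1^c} Q^{p-1}(x)\frac{x}{|x|}|x|^{-\frac{d-1}{2}}e^{-|x|}Q(x+y)\, dx$, is a vector; by the reflection symmetry across the $x^1$-axis (which fixes $y = Re_1$ and under which $Q^{p-1}(x)|x|^{-\frac{d-1}{2}}e^{-|x|}Q(x+y)$ is invariant while $\frac{x}{|x|}$ has its orthogonal components flipped), all components orthogonal to $e_1$ vanish; the $e_1$-component is $c_Q \int_{B_1^c} Q^{p-1}(x)\frac{x^1}{|x|}|x|^{-\frac{d-1}{2}}e^{-|x|}Q(x+y)\, dx$, which by the now-standard argument of Lemma \ref{esti} is $\approx R^{-\frac{d-1}{2}}e^{-R}$; to extract a \emph{precise} constant multiple one repeats the polar-coordinate / substitution computation of Lemma \ref{lem1-4}, where the $\frac{x^1}{|x|} = \cos\theta$ insertion (with $\cos\theta \to 1$ in the dominant range $s = o(r)$, $r = o(R)$) does not change the leading constant, yielding $\bar c\, e_1\, R^{-\frac{d-1}{2}}e^{-R}$ up to $O(R^{-\frac{d+1}{2}}e^{-R})$ for some $\bar c > 0$ combining $c_Q$ and the value of the (convergent) $r$- and $s$-integrals.

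Assembling the two regions gives the stated estimate with $\bar c > 0$. The main obstacle I anticipate is \textbf{bookkeeping the error terms uniformly}: one must verify that every polynomial prefactor loss (from \eqref{infi}, from the ball $B_1$ expansion of $Q(\cdot+y)$, and from the tail region where $|x|$ is comparable to $R$) genuinely costs a full power $R^{-1}$ and not merely $R^{-1/2}$ or a logarithm — in particular the contribution of the annulus $B_{2R/3}\setminus B_1$ must be handled by the same substitution $t = \sqrt{r^2 + 2Rr\cos\theta + R^2}$, $s = t + r - R$ as in \eqref{4.151}, checking that inserting $(1+r)^{-1}$ (for the error term) or $\cos\theta$ (for the main term, where $\cos\theta = 1 - O(s/r)$) integrates to the claimed orders. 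A secondary point is making the symmetry/reflection argument rigorous for the vanishing of the transverse components, which is clean but should be stated explicitly rather than waved at.
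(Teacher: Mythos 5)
Your strategy expands the wrong factor, and in doing so it drops the order-one modulation of $Q(x+y)$ near the origin, which is precisely where the constant $\bar c$ comes from. On $B_1$ one has $Q(x+Re_1)=c_QR^{-\frac{d-1}{2}}e^{-R}e^{-x^1}\big(1+O(R^{-1})\big)$, and the factor $e^{-x^1}$ is \emph{not} $1+o_R(1)$ on the unit ball; replacing it by $1+o_R(1)$, as you do, makes your ``main term'' $\big(\int_{B_1}Q^{p-1}\nabla Q\,dx\big)R^{-\frac{d-1}{2}}e^{-R}$, which vanishes identically, since $Q^{p-1}\nabla Q=\tfrac1p\nabla(Q^p)$ is odd (indeed $\int_{\mathbb{R}^d}Q^{p-1}\nabla Q\,dx=0$, so without the $e^{-x^1}$ weight there is no leading term at all). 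The remainder you discard on $B_1$ is then itself of size $R^{-\frac{d-1}{2}}e^{-R}$, not $O(R^{-\frac{d+1}{2}}e^{-R})$, so the $B_1$ piece is mis-handled at the leading order.

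The same defect recurs on $B_1^c$ in two ways. Because $p>1$, the integral is concentrated at bounded $|x|$ (the exponential weight is $e^{-p|x|-|x+Re_1|}\le e^{-R}e^{-(p-1)|x|}$), so the error in \eqref{infi} for $\nabla Q(x)$, which is only a factor $(1+|x|)^{-1}$ smaller than the main profile, is \emph{not} $O(R^{-\frac{d+1}{2}}e^{-R})$: redoing Lemma \ref{esti} with the extra $(1+|x|)^{-1}$ still yields $\approx R^{-\frac{d-1}{2}}e^{-R}$, because the gain acts where the integral has no mass. Thus approximating the near factor can never reach the stated precision. Moreover, the putative main term changes sign across $x^1=0$ with near-cancellation, so Lemma \ref{esti} (two-sided bounds for nonnegative integrands) cannot produce a definite positive coefficient, and the polar computation \eqref{4.151}/Lemma \ref{lem1-4} is built for the balanced case where the mass spreads over $1\le r\le \tfrac{2R}{3}$; in that computation the dominant range ($s$ small) has $\cos\theta\to-1$, not $+1$ as you assert, so the ``insert $\cos\theta$ and keep the same constant'' step is wrong. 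The paper avoids all of this by expanding the \emph{far} factor: on $|x|\le\theta|y|$ with $p\theta>1$ it writes $Q(x+y)=c_Q|y|^{-\frac{d-1}{2}}e^{-|y|-\frac{y\cdot x}{|y|}}+O\big((1+|x|^2)|y|^{-\frac{d+1}{2}}e^{-|y|+|x|}\big)$, for which the relative error is genuinely $O(|y|^{-1})$ on the region carrying the mass, and then evaluates $\int_{\mathbb{R}^d} e^{-\frac{y\cdot x}{|y|}}Q^{p-1}\nabla Q\,dx$ exactly by integration by parts, obtaining $\bar c=\frac{c_Q}{p}\int_{\mathbb{R}^d} e^{-x^1}Q^p\,dx>0$. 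That modulation-plus-integration-by-parts step is the missing idea; without it your argument neither identifies a nonzero $\bar c$ nor achieves the $O(|y|^{-\frac{d+1}{2}}e^{-|y|})$ error.
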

\begin{proof}
Choose  $\theta\in (0, 1)$ such that $p\theta>1$. Then by \eqref{infi}, we have
\begin{equation}\label{2.9}
\begin{aligned}
 \Big{|}\int_{|x|>\theta |y|} Q^{p-1}(x)  \nabla Q(x) Q(x+y)dx\Big{|}
& \leq C|y|^{\frac{p(1-d)}{2}}e^{-p\theta |y|} \int_{|x|>\theta |y|} Q(x+y)dx\\& \leq C|y|^{-\frac{d+1}{2}}e^{-|y|}
 .
 \end{aligned}
 \end{equation}
 On the other hand, in the region $|x|\leq \theta |y|$,
 we have
 \[ \Big{|}|x+y|^{-\frac{d-1}{2}}- |y|^{-\frac{d-1}{2}}\Big{|}\lesssim {|x|}{|y|^{-\frac{d+1}{2}}} ,  \]
 and
 \[ \Big{|}|x+y|-|y|-\frac{y\cdot x}{|y|}\Big{|}\lesssim  \frac{|x|^2}{|y|}.\]
 It follows from the mean value theorem that
   \begin{equation}\label{2.10}
 \begin{aligned}
&\Big{|}|x+y|^{-\frac{d-1}{2}} e^{-|x+y|}-|y|^{-\frac{d-1}{2}} e^{-|y|-\frac{y\cdot x}{|y|}} \Big{|}
\\&\leq
\Big{|} |x+y|^{-\frac{d-1}{2}} e^{-|x+y|}   -|y|^{-\frac{d-1}{2}} e^{-|x+y|}   \Big{|}+ \Big{|}|y|^{-\frac{d-1}{2}} e^{-|x+y|}- |y|^{-\frac{d-1}{2}} e^{-|y|-\frac{y\cdot x}{|y|}}   \Big{|}\\&
\lesssim \frac{|x|}{|y|}|y|^{-\frac{d-1}{2}} e^{-|x+y|}
+ \frac{|x|^2}{|y|}|y|^{-\frac{d-1}{2}} e^{-|y|+|x|} \\&
\lesssim (1+|x|^2)|y|^{-\frac{d+1}{2}} e^{-|y|+|x|}
.
 \end{aligned}
 \end{equation}
By \eqref{infi}, we deduce that
 \begin{equation}\label{2.11}
 \begin{aligned}
 \Big{|}\int_{|x|\leq \theta |y|} (Q(x+y)- &c_{Q} |x+y|^{-\frac{d-1}{2}} e^{-|x+y|}) \nabla Q(x)Q^{p-1}(x) dx
\Big{|}
 \\&\lesssim  \int_{|x|\leq \theta |y|}  |x+y|^{-\frac{d+1}{2}}(1+|x|)^{-\frac{d-1}{2}} e^{-|x+y|-|x|}   Q^{p-1}(x)   dx\\&
 \lesssim  |y|^{-\frac{d+1}{2}} e^{-|y|}.
 \end{aligned}
 \end{equation}
 Additionally, we deduce from \eqref{2.10} and \eqref{infi} that
  \begin{equation}
 \begin{aligned}
 \Big{|} \int_{|x|\leq\theta |y|} &( |x+y|^{-\frac{d-1}{2}} e^{-|x+y|}  -  |y|^{-\frac{d-1}{2}} e^{-|y|-\frac{y\cdot x}{|y|}}  )   Q^{p-1}(x)  \nabla Q(x) dx \Big{|}\\
&\lesssim\int_{|x|\leq\theta |y|} (1+|x|^2)|y|^{-\frac{d+1}{2}} e^{-|y|+|x|} (1+|x|)^{-\frac{p(d-1)}{2}}e^{-p|x|}dx\\&
\lesssim|y|^{-\frac{d+1}{2}}e^{-|y|}.
   \end{aligned}
  \end{equation}
 On the other hand,
   \begin{equation}\label{2.13}
 \begin{aligned}
\Big{|} \int_{|x|> \theta |y|}  &|y|^{-\frac{d-1}{2}} e^{-|y|-\frac{y\cdot x}{|y|}}     Q^{p-1}(x)  \nabla Q(x) dx\Big{|}\\&
 \lesssim|y|^{-\frac{d-1}{2}} e^{-|y|} \int_{|x|> \theta |y|} |x|^{-\frac{p(d-1)}{2}}e^{-(p-1)|x|}dx\\&
 \lesssim |y|^{-\frac{d+1}{2}}e^{-|y|}.
  \end{aligned}
 \end{equation}
 Consequently, we conclude from \eqref{2.9}, \eqref{2.11}-\eqref{2.13} that
 \begin{equation}
 \begin{aligned}
\Big{|}\int_{\mathbb{R}^d} & Q^{p-1}(x)  \nabla Q(x)( Q(x+y) -c_{Q} |y|^{-\frac{d-1}{2}} e^{-|y|-\frac{y\cdot x}{|y|}}) dx \Big{|}  \\
&\lesssim |y|^{-\frac{d+1}{2}}e^{-|y|}.
\end{aligned}
\end{equation}
By integrating by parts, we obtain
$$ p\int_{\mathbb{R}^d} e^{-\frac{y\cdot x}{|y|}}     Q^{p-1}(x)  \nabla Q(x) dx
=\frac{y}{|y|}\int_{\mathbb{R}^d}  e^{-\frac{y\cdot x}{|y|}}     Q^{p}(x) dx
=\frac{y}{|y|}\int_{\mathbb{R}^d}  e^{-x^1}     Q^{p}(x) dx.
     $$
     Thus,    choosing $$\bar{c}=\frac{c_Q}{p}\int_{\mathbb{R}^d}  e^{-x^1}     Q^{p}(x) dx,$$   we complete the proof.
\end{proof}

\subsection{The complete proof of Theorem \ref{theorem1.2}}
Before presenting the proof of Theorem \ref{theorem1.2}, we first provide a detailed analysis of the two main estimates as mentioned in Section 1.2.
 
For a set of points \( \{y_i\}_{i=1}^{m} \subset \mathbb{R}^d \), recall the notations:
\[
\sigma = \sum_{i=1}^{m} Q_i, \quad Q_i(x) := Q(x + y_i).
\]
Let
\[
\mathscr{F} := \text{span}\left\{ \partial_j Q_i \mid 1 \leq j \leq d, 1 \leq i \leq m \right\}.
\]
Denote by \( P_{\mathscr{F}^\perp} \) (respectively \( P_{\mathscr{F}} \)) the projection from \( H^1 \) onto \( \mathscr{F}^\perp \) (respectively \( \mathscr{F} \)).
Define the compact operator \( K \) by
\[
K = p(-\Delta + 1)^{-1} \sigma^{p-1}: H^1 \to H^1.
\]

We have the following result:
\begin{lemma}\label{reduction}
There exists $\widetilde{R}>0$  such that, 
if $R:=\min\limits_{i\neq j}|y_i-y_j|\geq \widetilde{R}$, then the problem
$$  (\mathbf{id}-P_{\mathscr{F}^\perp}K) v=\varphi $$
 is uniquely solvable for all  $\varphi \in \mathscr{F}^\perp$, and the following estimate holds:
\begin{equation}
\|v\|_{H^1}\approx \|\varphi\|_{H^1}.
\end{equation}
\end{lemma}
\begin{proof}
  We first prove the a priori estimate 
  \begin{equation}\label{prio}\|v\|_{H^1}\leq C\|\varphi\|_{H^1}.\end{equation}
   Otherwise, suppose there exist a sequence $\varphi_n$ such that $\|\varphi_n\|_{H^1}=o_n(1)$ in $\mathscr{F}_n^\perp$,  along with $\{y_{i,n}\}$ such that $\min\limits_{i\neq j}|y_{i,n}-y_{j,n}|\to +\infty$ and a sequence 
  $v_n\in \mathscr{F}_n^\perp$ such that
  $$(\mathbf{id}-P_{\mathscr{F}_n^\perp} K_n) v_n=\varphi_n \in \mathscr{F}_n^\perp, \|v_n\|_{H^1}=1.$$
Rewriting this, we obtain
  \[(-\Delta+1) v_n- p(-\Delta+1) P_{\mathscr{F}_n^\perp} (-\Delta+1)^{-1}\sigma_n^{p-1} v_n = (-\Delta+1) \varphi_n.  \]
  Therefore,
  \begin{equation}
  (-\Delta+1) v_n- p\sigma_n^{p-1} v_n =   (-\Delta+1) \varphi_n -p \sum\limits_{i,j}\int_{\mathbb{R}^d} \sigma_n^{p-1} v_n e_{ij}^{(n)} dx  (-\Delta+1)e_{ij}^{(n)}.
  \end{equation}
 Here, \( e_{ij}^{(n)} \) denotes an orthonormal basis of \( \mathscr{F}_n \). In fact, the set \( \{ e_{ij}^{(n)} \} \) can be obtained by applying the Gram-Schmidt orthogonalization process to \( \{ \partial_j Q_i^{(n)} \}_{ 1 \leq j \leq d, 1 \leq i \leq m} \). Therefore, for fixed $i_0,j_0$, we may assume that
$$
e_{i_0 j_0}^{(n)} - \frac{\partial_{j_0} Q_{i_0}^{(n)}}{\|\partial_{j_0} Q_{i_0}^{(n)}\|_{H^1}} = \sum_{i , j} o_n(1) \partial_j Q_i^{(n)}.
$$
Since \( v_n \in \mathscr{F}_n^\perp \), we have
  \[ |\int_{\mathbb{R}^d} \sigma_n^{p-1} v_n e_{i_0j_0}^{(n)} dx|\lesssim  \sum\limits_{i} \Big{|}\int_{\mathbb{R}^d} (\sigma_n^{p-1}-|Q_i^{(n)}|^{p-1}) v_n  \nabla Q_i^{(n)} dx\Big{|} =o_n(1).\]
Define \( v_{n,i} := v_n(\cdot - y_{n,i}) \). Up to a subsequence, \( v_{n,i} \) converges weakly in \( H^1 \) to a function \( v_i \), which satisfies the equation
\[
(-\Delta + 1) v_i - p Q^{p-1} v_i = 0, \quad \int_{\mathbb{R}^d} Q^{p-1} \nabla Q v_i dx = 0.
\]
Therefore, \( v_i = 0 \), and consequently, \( v_{n,i} \to 0 \) in \( L^2_{\text{loc}} \).

For a fixed \( T \) large enough such that $Q^{p-1}(x)\leq \frac{1}{100pm^p}$ in $B_T^c$, test the equation with \( v_n \), we obtain
  \begin{equation}
  \begin{aligned}
\|v_n\|_{H^1}^2&= \int_{\mathbb{R}^d} p\sigma_n^{p-1} v_n^2 dx +o_n(1)\\&
\leq C\int_{\cup_{i=1}^{m} B_{T}(-y_{n,i})}  v_n^2 dx + \int_{\mathbb{R}^d\setminus \cup_{i=1}^{m} B_{T}(-y_{n,i})} p\sigma_n^{p-1} v_n^2 dx +o_n(1)
\\&
\leq o_n(1)+\frac{1}{100}\|v_n\|_{L^2}^2\\&
\leq \frac{1}{2}.
\end{aligned}
\end{equation}
This leads to a contradiction with $\|v_n\|_{H^1}=1.$ Therefore, we obtain \eqref{prio} and  the  solvability can be obtained by Fredholm's alternative.

On the other hand,
 since $\|K\|_{\mathscr{L}(H^1, H^1)}\leq pm^p,$ we conclude that
$\|\varphi\|_{H^1}\leq C \|v\|_{H^1}$ uniformly with respect to $R$.
\end{proof}

Following the above notations, we obtain the following result.
 
\begin{proposition}\label{p1}
Let  $u=\sigma+\rho$ , and  $R:=\min\limits_{i\neq j}|y_i-y_j|\geq \widetilde{R}$. Then there exists $\delta>0$ (independent of $R$), such that,  if $\rho \in \mathscr{F}^{\perp} $  and $\|\rho\|_{H^1}<\delta$, we have the estimate
\begin{equation}\label{third}
  \|\rho\|_{H^1}
  \lesssim
    \|f\|_{L^2}  +\|h\|_{H^{-1}},
\end{equation}
where $f$ and $h$  are defined in \eqref{fh}.
\end{proposition}
\begin{proof}
Recall that, by direct calculation, we have
\begin{equation}\label{3-13}
\begin{aligned}
(-\Delta+1) \rho-p\sigma^{p-1} \rho
&= f+h +  N(\rho),
\end{aligned}
\end{equation}
where \begin{equation}
f:=(\sum\limits_{i=1}^{m}  Q_i )^p -\sum\limits_{i=1}^{m}  Q_i^p,\quad h:=-\Delta u+u -|u|^{p-1}u
\end{equation}
and
  \begin{align*}
  N(\rho):=|\sigma+\rho|^{p-1}(\sigma+\rho) -\sigma^p -p\sigma^{p-1} \rho=
  \begin{cases}
    O(\rho^2), & \mbox{if } p>2 \\
    O(\rho^p), & \mbox{if } 1<p\leq2.
  \end{cases}
  \end{align*}
   By Lemma \ref{reduction},  since $\rho$ is a solution to
   \begin{equation*}
\rho-P_{\mathscr{F}^\perp}K \rho
= P_{\mathscr{F}^\perp} (-\Delta+1)^{-1}(f+h+N(\rho)),
\end{equation*}
     we have
  \begin{equation}
  \begin{aligned}
  \|\rho\|_{H^1}&\lesssim \|P_{\mathscr{F}^\perp} (-\Delta+1)^{-1}(f+h+N(\rho)) \|_{H^1}\\&
  \lesssim \|f+h+N(\rho)\|_{H^{-1}}\\&
  \lesssim \begin{cases}
             \|f\|_{L^2}  +\|h\|_{H^{-1}}+\|\rho\|_{H^1}^{2} , & \mbox{if } p>2 \\
             \|f\|_{L^2}  +\|h\|_{H^{-1}}+\|\rho\|_{H^1}^{p} , & \mbox{if }  1<p\leq2.
           \end{cases}
  \end{aligned}
  \end{equation}
\end{proof}

 In order to compare  $\|\rho\|_{H^1}, \|f\|_{L^2} $ and $\|h\|_{H^{-1}}$, we still need: 
\begin{proposition}\label{prop3.} Under the same assumption in Proposition  \ref{p1},
we have
  \begin{equation}\label{second}
  R^{-\frac{d-1}{2}}e^{-R}\lesssim \|h\|_{H^{-1}}+\|\rho\|_{H^1}^{2}+ e^{-(1-\min\{1,\frac{p(p-1)}{4} \}) R}\|\rho\|_{H^1}^{\min\{p-1,1\}}.
  \end{equation}
\end{proposition}
\begin{proof}
  Test the equation \eqref{3-13} with $\nabla Q_{i_0}$ for some fixed $i_0$, we obtain
  \begin{equation}
  \begin{aligned}
    \int_{\mathbb{R}^d} (f+h+N(\rho))\nabla Q_{i_0} dx&=\int_{\mathbb{R}^d} -p\sigma^{p-1} \rho \nabla Q_{i_0} dx\\&=-p \int_{\mathbb{R}^d} (\sigma^{p-1}-Q_{i_0}^{p-1}) \rho \nabla Q_{i_0} dx.
    \end{aligned}
  \end{equation}
  Since 
  \begin{equation}
  \begin{aligned}
   (\sigma^{p-1}-Q_{i_0}^{p-1}) |\rho \nabla Q_{i_0}|& \lesssim 
  \begin{cases}
 |\rho| Q_{i_0}^{p-1}\sum \limits_{i\neq i_0} Q_i, & \mbox{if } Q_{i_0}>\sum\limits_{i\neq i_0} Q_i \\
   (\sum\limits_{i\neq i_0} Q_i)^{p-1} |\rho| Q_{i_0}, & \mbox{if } Q_{i_0}\leq \sum\limits_{i\neq i_0} Q_i 
  \end{cases}\\&
  \lesssim
    \begin{cases}
  \sum\limits_{i\neq i_0} Q_iQ_{i_0}|\rho|^{\min\{p-1,1\}}  , & \mbox{if } |\rho|< Q_{i_0}, Q_{i_0}>\sum\limits_{i\neq i_0} Q_i  \\
    |\rho|^{p+1}, & \mbox{if } |\rho|\geq Q_{i_0}>\sum\limits_{i\neq i_0} Q_i\\
     |\rho|^{p+1}, & \mbox{if } Q_{i_0}\leq \sum\limits_{i\neq i_0} Q_i\leq |\rho| \\
    \sum\limits_{i\neq i_0} Q_i |\rho|^{\min\{p-1,1\}} Q_{i_0}, & \mbox{otherwise}  ,
  \end{cases}
  \end{aligned}
  \end{equation}
 here, we have used the fact that $Q^{\max\{p-1,1\}}\lesssim Q$ since $Q$ is bounded. 
Thus, for small  $\varepsilon>0$,   via \eqref{es} and H\"{o}lder inequality,  we obtain
\begin{equation}\label{4.111}
\begin{aligned}
\Big{|}\int_{\mathbb{R}^d} (\sigma^{p-1}-Q_{i_0}^{p-1}) \rho \nabla Q_{i_0} dx\Big{|}&\lesssim \|\rho\|_{H^1}^{p+1}
+ \sum\limits_{i\neq i_0}  \int_{\mathbb{R}^d}   Q_i |\rho|^{\min\{p-1,1\}} Q_{i_0} dx \\&\lesssim
\|\rho\|_{H^1}^{p+1}
+ R^{-\frac{(1-\varepsilon)(d-1)}{2}} e^{-(1-\varepsilon) R}  \int_{\mathbb{R}^d}  |\rho|^{\min\{p-1,1\}} Q_{i_0}^{\epsilon} dx\\&
\lesssim
 \|\rho\|_{H^1}^{p+1}+ R^{-\frac{(1-\varepsilon)(d-1)}{2}} e^{-(1-\varepsilon) R}\|\rho\|_{H^1}^{\min\{p-1,1\}}.
 \end{aligned}
\end{equation}
In particular,
\begin{equation}\label{3.35}
\Big{|}\int_{\mathbb{R}^d} (\sigma^{p-1}-Q_{i_0}^{p-1}) \rho \nabla Q_{i_0} dx\Big{|}\lesssim \|\rho\|_{H^1}^{p+1}+ e^{-(1-\min\{1,\frac{p(p-1)}{4} \}) R}\|\rho\|_{H^1}^{\min\{p-1,1\}}.
\end{equation}
Next,
   \begin{equation}\label{3.36}
   \begin{aligned}
    |\int_{\mathbb{R}^d}  N(\rho)\nabla Q_{i_0} dx|&
    \lesssim   \int_{\mathbb{R}^d} \Big{|}|\sigma+\rho|^{p-1}(\sigma+\rho) -\sigma^p -p\sigma^{p-1} \rho\Big{|}Q_{i_0} dx\\&
    \lesssim \int_{|\rho|>\sigma} |\rho|^{p+1} dx + \int_{|\rho|\leq \sigma} \sigma^{p-1} |\rho|^2 dx\\&
    \lesssim  \|\rho\|_{H^1}^2.
    \end{aligned}
    \end{equation}
  Therefore, we obtain that
  \begin{equation}\label{3.16}
 | \int_{\mathbb{R}^d} f \nabla Q_{i_0} dx|\lesssim \|h\|_{H^{-1}}+\|\rho\|_{H^1}^{2}+ e^{-(1-\min\{1,\frac{p(p-1)}{4} \}) R}\|\rho\|_{H^1}^{\min\{p-1,1\}}.
  \end{equation}
 
Using Lemma \ref{point}, without loss of generality, we assume  that $$y_1=0, ~~y_{k}^{1}>0  ~~ \text{  and  }~~ \frac{y_{k}^{1}}{|y_{k}|}>c, ~~\forall k>1.$$
We deduce from \eqref{3.16} that
$$| \int_{\mathbb{R}^d} f \partial_1 Q_{1} dx|\lesssim \|h\|_{H^{-1}}+\|\rho\|_{H^1}^{2}+ e^{-(1-\min\{1,\frac{p(p-1)}{4} \}) R}\|\rho\|_{H^1}^{\min\{p-1,1\}}.$$
Denote $x=(x^1, x')$, then
\begin{equation}
\begin{aligned}
\int_{\mathbb{R}^d} f \partial_1 Q_{1} dx
=\int_{x^1<0} (f(x^1, x')-f(-x^1,x'))\partial_1 Q_{1} dx.
\end{aligned}
\end{equation}
Set \begin{equation}\label{R0}R_{i_0}:=\min_{j\neq i_0}|y_{i_0}-y_j|.\end{equation}
In the region $\{x^1<0\}$, we have $\partial_1 Q_{1}\geq 0$ and since $y_{k}^{1}>0, k\geq 2$, we have $f(x^1, x')-f(-x^1,x')>0$. Therefore,
\begin{equation}\label{first}
\begin{aligned}
|\int_{\mathbb{R}^d} f \nabla Q_{1} dx|&\geq 
\int_{\mathbb{R}^d} f \partial_1 Q_{1} dx\\&>\int_{\{-2<x^1<-1, |x'|<1\}} (f(x^1, x')-f(-x^1,x'))\partial_1 Q_{1} dx\\&
\gtrsim \int_{\{-2<x^1<-1, |x'|<1\}} f(x^1, x')-f(-x^1,x') dx \\&
\approx  R_1^{-\frac{d-1}{2}}e^{-R_1}.
\end{aligned}
\end{equation}
We have used the mean value theorem and in the region \( \{-2 < x^1 < 2, |x'| < 1 \} \),
\[
\partial_1 f \approx Q_1^{p-1} \sum_{j > 1} \partial_1 Q_j+Q_1^{p-2} \partial_1 Q_1\sum_{j > 1}  Q_j \approx \sum_{j > 1} |y_j|^{-\frac{d-1}{2}} e^{-|y_j|} \approx R_1^{-\frac{d-1}{2}} e^{-R_1}.
\]
due the fact that  \( y_k^1 \geq c|y_k|\geq  cR_1 \) for all \( k > 2 \) and \eqref{infi}.

Thus, we obtain the estimate
\[
R_1^{-\frac{d-1}{2}} e^{-R_1} \lesssim \|h\|_{H^{-1}}+\|\rho\|_{H^1}^{2}+ e^{-(1-\min\{1,\frac{p(p-1)}{4} \}) R}\|\rho\|_{H^1}^{\min\{p-1,1\}}.
\]

Next, 
if $R_2\geq R_1$, then 
\[R_2^{-\frac{d-1}{2}} e^{-R_2} \leq
R_1^{-\frac{d-1}{2}} e^{-R_1} \lesssim \|h\|_{H^{-1}}+\|\rho\|_{H^1}^{2}+ e^{-(1-\min\{1,\frac{p(p-1)}{4} \}) R}\|\rho\|_{H^1}^{\min\{p-1,1\}}.
\]
If $R_2<R_1$, then 
$R_{2}:=\min_{j>2}|y_{2}-y_j|$.
Denote $\widetilde{f}:= (\sum\limits_{i=2}^m  Q_i )^p -\sum\limits_{i=2}^{m}  Q_i^p$.
Note that
\begin{equation}
\begin{aligned}
|f-(\sum_{i=2}^m  Q_i )^p +\sum_{i=2}^m  Q_i^p |&=|(\sum_{i=1}^m  Q_i )^p - Q_1^p
-(\sum_{i=2}^m  Q_i )^p |\\&
\lesssim
\begin{cases}
  Q_1^p, & \mbox{if } Q_1>\sum\limits_{i=2}^m  Q_i \\
  \sum\limits_{i=2}^m  Q_i^{p-1} Q_1, & \mbox{otherwise}.
\end{cases}
\end{aligned}
\end{equation}
Thus
\begin{equation}
\begin{aligned}
|\int_{\mathbb{R}^d}  f \nabla Q_{2} dx
-\int_{\mathbb{R}^d} \widetilde{f} \nabla Q_{2} dx|&\lesssim  \int_{\mathbb{R}^d}  ( Q_1^p Q_2+   \sum_{i=2}^m  Q_i^{p-1} Q_1 Q_2) dx\\&
\lesssim   R_1^{-\frac{d-1}{2}}e^{-R_1}\\&
\lesssim \|h\|_{H^{-1}}+\|\rho\|_{H^1}^{2}+ e^{-(1-\min\{1,\frac{p(p-1)}{4} \}) R}\|\rho\|_{H^1}^{\min\{p-1,1\}}.
\end{aligned}
\end{equation}
Similar to the proof of \eqref{first} we obtain that
\[|\int_{\mathbb{R}^d}  \widetilde{f}  \nabla Q_{2} dx|\gtrsim R_2^{-\frac{d-1}{2}}e^{-R_2}, \]
therefore, in any case we obtain
\begin{equation}
  R_2^{-\frac{d-1}{2}}e^{-R_2}\lesssim \|h\|_{H^{-1}}+\|\rho\|_{H^1}^{2}+ e^{-(1-\min\{1,\frac{p(p-1)}{4} \}) R}\|\rho\|_{H^1}^{\min\{p-1,1\}},
  \end{equation}
and we finish the proof by induction.
\end{proof}

Now we can complete the proof of Theorem \ref{theorem1.2}.

\begin{proof}[Proof of Theorem \ref{theorem1.2}]
By equations \eqref{second} and \eqref{third}, we obtain
\[
R^{-\frac{d-1}{2}} e^{-R} \lesssim \|h\|_{H^{-1}} + \|f\|_{L^2}^{2} + e^{-(1-\min\{1,\frac{p(p-1)}{4} \}) R} (\|h\|_{H^{-1}}+\|f\|_{L^2})^{\min\{p-1,1\}} .
\]
For $p>2$, using Lemma \ref{lem3.1} , Lemma \ref{esti} and  \eqref{es}, one can deduce that 
\[
\|f\|_{L^2}\approx  R^{-\frac{d-1}{2}} e^{-R}.
\]
For $1<p\leq 2$,  via  Lemma \ref{lem3.1} we obtain
\[
\left( \sum_{i=1}^m Q_i \right)^p - \sum_{i=1}^m Q_i^p \leq \sum\limits_{\substack{i,j \\ i\neq j}} \left[ (Q_i + Q_j)^p - Q_i^p - Q_j^p \right].
\]
For any fixed $i_0, j_0$, it is easy to see that 
\[
\left( \sum_{i=1}^m Q_i \right)^p - \sum_{i=1}^m Q_i^p \geq    (Q_{i_0} + Q_{j_0})^p - Q_{i_0}^p - Q_{j_0}^p.
\]
Therefore, by Lemmas \ref{lem2-2} and \ref{lem1-4}, we obtain
\[
\|f\|_{L^2} \approx F_{d,p} \left( R^{-\frac{d-1}{2}} e^{-R} \right).
\]
In any cases, we have
  \[  \|f\|_{L^2}^{2} + e^{-(1-\min\{1,\frac{p(p-1)}{4} \}) R} \|f\|_{L^2}^{\min\{p-1,1\}}=o_R(R^{-\frac{d-1}{2}}e^{-R}). \]
 Therefore, $R^{-\frac{d-1}{2}}e^{-R}\lesssim \|h\|_{H^{-1}}$ and by \eqref{third},
  \begin{equation}
  \begin{aligned}
  \|\rho\|_{H^1}
  &\lesssim
    \|f\|_{L^2}  +\|h\|_{H^{-1}}\\&
    \lesssim F_{d,p}( R^{-\frac{d-1}{2}}e^{-R})+\|h\|_{H^{-1}}\\&
     \lesssim F_{d,p}( \|h\|_{H^{-1}})
    .
    \end{aligned}
\end{equation}
\end{proof}

\subsection{Sharp examples}
In this section, we always assume that  \( 1\leq p < 2 \) 
or \( p = 2, d = 1, 2, 3 \). 
To demonstrate the sharpness, we observe that by Lemma \ref{reduction},
for $f$ and $N(\rho)$ giving in \eqref{fh} and \eqref{N},
 we have:
 \begin{proposition}\label{rh}
   For $R:=\min\limits_{i\neq j}|y_i-y_j|$ large enough,
 we can find a solution $\rho\in \mathscr{F}^\perp$ to equation:
\begin{equation}\label{3.45}
\begin{aligned}
  (-\Delta+1) \rho- p\sigma^{p-1}\rho& = (-\Delta+1) P_{\mathscr{F}^\perp} (-\Delta+1)^{-1} (f+N(\rho)) \\&+ p \sum\limits_{i,j}\int_{\mathbb{R}^d}  \sigma^{p-1} \rho e_{ij} dx  (-\Delta+1) e_{ij}.
  \end{aligned}
  \end{equation}
  Here, $e_{ij}$ are orthonormal basis of $\mathscr{F}$.  In particular, we have
  \begin{equation}
  \begin{aligned}
   \|\rho\|_{H^1}^2&\approx \|P_{\mathscr{F}^\perp} (-\Delta+1)^{-1} f\|_{H^{1}}^2\\&
=\| f\|_{H^{-1}}^2-\|P_{\mathscr{F}} (-\Delta+1)^{-1} f\|_{H^{1}}^2
   .
   \end{aligned}
  \end{equation}
 \end{proposition}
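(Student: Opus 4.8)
The plan is to reverse the logic of Lemma \ref{reduction}: there we solved $(\mathbf{id}-P_{\mathscr{F}^\perp}K)v=\varphi$ for given $\varphi\in\mathscr{F}^\perp$; here I want to produce $\rho\in\mathscr{F}^\perp$ solving the \emph{nonlinear} reduced equation obtained by stripping the $\mathscr{F}$-component off the full equation $(-\Delta+1)\rho-p\sigma^{p-1}\rho=f+N(\rho)$. Concretely, rewriting \eqref{3.45} in the Hilbert-space form, $\rho$ solves $(\mathbf{id}-P_{\mathscr{F}^\perp}K)\rho=P_{\mathscr{F}^\perp}(-\Delta+1)^{-1}(f+N(\rho))$. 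By Lemma \ref{reduction} the operator $(\mathbf{id}-P_{\mathscr{F}^\perp}K)$ is invertible on $\mathscr{F}^\perp$ with inverse bounded uniformly in $R$; call it $L_R^{-1}$. So I want a fixed point of the map
\[
\mathcal{T}(\rho):=L_R^{-1}\,P_{\mathscr{F}^\perp}(-\Delta+1)^{-1}\big(f+N(\rho)\big)
\]
on a small ball of $\mathscr{F}^\perp$.

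The key steps, in order: (i) Record that $\|P_{\mathscr{F}^\perp}(-\Delta+1)^{-1}g\|_{H^1}\lesssim\|g\|_{H^{-1}}$ and, by Lemma \ref{reduction}, $\|L_R^{-1}\varphi\|_{H^1}\approx\|\varphi\|_{H^1}$, both uniformly in $R$. (ii) Estimate the source: from the interaction computations of Section 3.1 (Lemmas \ref{lem2-2}, \ref{lem1-4}, \ref{esti} and \eqref{es}), $\|f\|_{L^2}\approx F_{d,p}(R^{-\frac{d-1}{2}}e^{-R})$, which is $o_R(1)$. (iii) Estimate the nonlinearity: since $N(\rho)=O(\rho^2)$ for $p>2$ and $O(\rho^p)$ for $1<p\le 2$ (and the obvious bound when $p=1$), one has $\|N(\rho)\|_{H^{-1}}\lesssim\|N(\rho)\|_{L^{\frac{2d}{d+2}}\cap\ldots}\lesssim \|\rho\|_{H^1}^{\min\{p,2\}}$ by Sobolev embedding, and similarly $\|N(\rho_1)-N(\rho_2)\|_{H^{-1}}\lesssim(\|\rho_1\|_{H^1}+\|\rho_2\|_{H^1})^{\min\{p,2\}-1}\|\rho_1-\rho_2\|_{H^1}$. (iv) Conclude: on the ball $B:=\{\|\rho\|_{H^1}\le A\|f\|_{L^2}\}$ with $A$ a large fixed constant, $\mathcal{T}$ maps $B$ into itself and is a contraction once $R$ is large (so that $\|f\|_{L^2}$, hence the radius, is small); the Banach fixed point theorem gives the desired $\rho$, and the construction forces $\rho\in\mathscr{F}^\perp$ automatically because $\mathcal{T}$ lands in $\mathscr{F}^\perp$. (v) For the final asymptotic identity: from $\rho=\mathcal{T}(\rho)$ and the bounds above, $\rho=L_R^{-1}P_{\mathscr{F}^\perp}(-\Delta+1)^{-1}f+O(\|\rho\|_{H^1}^{\min\{p,2\}})$, and since $\|f\|_{L^2}^{\min\{p,2\}}=o_R(\|f\|_{L^2})$ the quadratic (or $p$-th power) correction is negligible; moreover $L_R^{-1}=\mathbf{id}+o_R(1)$ in operator norm because $\|P_{\mathscr{F}^\perp}K\|\to 0$ in the relevant sense (the interaction $\int\sigma^{p-1}\rho\,e_{ij}$ terms are $o_R(1)$, as shown in Lemma \ref{reduction}). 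Hence $\|\rho\|_{H^1}^2=\|P_{\mathscr{F}^\perp}(-\Delta+1)^{-1}f\|_{H^1}^2(1+o_R(1))$; the final equality then follows from the orthogonal decomposition $\|(-\Delta+1)^{-1}f\|_{H^1}^2=\|f\|_{H^{-1}}^2=\|P_{\mathscr{F}}(-\Delta+1)^{-1}f\|_{H^1}^2+\|P_{\mathscr{F}^\perp}(-\Delta+1)^{-1}f\|_{H^1}^2$, using that $(-\Delta+1)^{-1}:H^{-1}\to H^1$ is an isometry.

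The main obstacle I anticipate is not the fixed-point scheme itself but controlling everything \emph{uniformly in $R$} and making sure the error terms are genuinely $o_R$ of the main term: the delicate point is that $\|f\|_{L^2}$ is exponentially small in $R$ with a dimension- and $p$-dependent polynomial prefactor (the whole content of $F_{d,p}$), so I must check that $\|N(\rho)\|_{H^{-1}}\approx\|\rho\|_{H^1}^{\min\{p,2\}}$, and the operator-norm error $\|L_R^{-1}-\mathbf{id}\|$, both decay \emph{faster} than $F_{d,p}(R^{-\frac{d-1}{2}}e^{-R})$ — which is where the restriction to $1\le p<2$ or $p=2,d\le 3$ quietly matters, exactly as in the sharpness statement. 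A secondary technical nuisance is that $P_{\mathscr{F}}(-\Delta+1)^{-1}f$ need not vanish, so $\rho$ is genuinely not proportional to $(-\Delta+1)^{-1}f$; but since we only need the norm asymptotics and not the precise profile, subtracting the $\mathscr{F}$-projection as in the displayed identity suffices.
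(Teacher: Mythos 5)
Your scheme is the same as the paper's: rewrite \eqref{3.45} as the fixed-point problem $\rho=(\mathbf{id}-P_{\mathscr{F}^\perp}K)^{-1}P_{\mathscr{F}^\perp}(-\Delta+1)^{-1}(f+N(\rho))$, invoke Lemma \ref{reduction} for the uniform invertibility, run the contraction mapping principle on a small ball of $\mathscr{F}^\perp$ (the paper takes radius $1/R$, you take $A\|f\|_{L^2}$; this is immaterial), and then read off $\|\rho\|_{H^1}\approx\|P_{\mathscr{F}^\perp}(-\Delta+1)^{-1}f\|_{H^1}$ using $N(\rho)=O(|\rho|^{\min\{2,p\}})$, with the final equality coming from the Pythagorean identity for the isometry $(-\Delta+1)^{-1}:H^{-1}\to H^1$. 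Steps (i)--(iv) and the contraction/Lipschitz bound for $N$ match the paper's argument.

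One intermediate claim in your step (v) is false: it is not true that $\|P_{\mathscr{F}^\perp}K\|\to 0$, nor that $(\mathbf{id}-P_{\mathscr{F}^\perp}K)^{-1}=\mathbf{id}+o_R(1)$ in operator norm. Indeed $KQ_i=p\,(-\Delta+1)^{-1}\sigma^{p-1}Q_i\approx pQ_i$ and $Q_i$ is (up to $o_R(1)$) orthogonal to $\mathscr{F}$, so $P_{\mathscr{F}^\perp}K$ has norm of order $p$ uniformly in $R$; what is $o_R(1)$ in Lemma \ref{reduction} are only the scalar couplings $\int\sigma^{p-1}v\,e_{ij}\,dx$, and the lemma delivers only the two-sided comparability $\|v\|_{H^1}\approx\|\varphi\|_{H^1}$, not closeness of the inverse to the identity. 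Consequently you cannot conclude the sharpened asymptotics $\|\rho\|_{H^1}^2=(1+o_R(1))\|P_{\mathscr{F}^\perp}(-\Delta+1)^{-1}f\|_{H^1}^2$ this way. This does not damage the proposition as stated, which only asserts $\approx$: from $\rho=(\mathbf{id}-P_{\mathscr{F}^\perp}K)^{-1}P_{\mathscr{F}^\perp}(-\Delta+1)^{-1}(f+N(\rho))$ and Lemma \ref{reduction} you get $\|\rho\|_{H^1}\approx\|P_{\mathscr{F}^\perp}(-\Delta+1)^{-1}f\|_{H^1}+O(\|\rho\|_{H^1}^{\min\{2,p\}})$, and the error is absorbed since $\|\rho\|_{H^1}^{\min\{2,p\}}=o_R(\|f\|_{L^2})$ while the main term is comparable to $\|f\|_{H^{-1}}\approx\|f\|_{L^2}$ in the regime considered (this last comparability is exactly the point you correctly flag as the place where $1\le p<2$ or $p=2$, $d\le 3$ enters, and it is settled by the projection estimate in the subsequent proposition). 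So: replace the ``$L_R^{-1}=\mathbf{id}+o_R(1)$'' step by the two-sided bound of Lemma \ref{reduction}, and your proof coincides with the paper's.
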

\begin{proof}
\eqref{3.45} is  equivalent to solve
\[ \rho =  \textbf{T}(\rho):= (\mathbf{id}-P_{\mathscr{F}^\perp}K)^{-1} P_{\mathscr{F}^\perp} (-\Delta+1)^{-1} (f+N(\rho)). \]
For $R$ large enough, it is easy to check that
$$\textbf{T}(\cdot): \{\rho\in \mathscr{F}^\perp \mid \|\rho\|_{H^1}\leq \frac{1}{R}\} \to  \{\rho\in \mathscr{F}^\perp \mid \|\rho\|_{H^1}\leq \frac{1}{R}\} .$$
For $\rho_1, \rho_2\in \{\rho\in \mathscr{F}^\perp \mid \|\rho\|_{H^1}\leq \frac{1}{R}\}$,  we have
\begin{equation}
\begin{aligned}
 \| \textbf{T}(\rho_1)-\textbf{T}(\rho_2) \|_{H^1}^2&\lesssim  \|   N(\rho_1)-N(\rho_2)\|_{H^{-1}}^2\\&
 \lesssim
 \|   N(\rho_1)-N(\rho_2)\|_{L^2}^2\\&
 \lesssim  \int_{\mathbb{R}^d} [ (\sigma+|\rho_1|+|\rho_2|)^{p-1}-\sigma^{p-1}]^2|\rho_1-\rho_2|^2 dx \\&\lesssim
 o_{R}(1)\|\rho_1-\rho_2\|_{H^1}^2.
 \end{aligned}
\end{equation}
By applying the contraction mapping principle, we can solve equation \eqref{3.45}. Moreover, since $N(\rho)=O(|\rho|^{\min\{2,p\}})$, we deduce from Lemma \ref{reduction} that
\begin{equation}
\begin{aligned}
\|\rho\|_{H^1} &\approx  \|P_{\mathscr{F}^\perp} (-\Delta+1)^{-1} f\|_{H^{1}} +O(\|\rho\|_{H^1}^{\min\{2,p\}})\\&
\approx  \|P_{\mathscr{F}^\perp} (-\Delta+1)^{-1} f\|_{H^{1}}.
\end{aligned}
\end{equation}
\end{proof}

In general, the $H^{-1}$ norm,  is not always suitable for estimation purposes. However, for the function  $f$, we establish the following equivalence of norms:
\begin{lemma}\label{norm}
The norms  $\|f\|_{H^{-1}}, \|f\|_{L^2}$ and $\|f\|_{H^{1}}$  are equivalent for the function $f$ defined in \eqref{fh}.
\end{lemma}
\begin{proof}
Since \begin{equation}
\begin{aligned}
 |\nabla f |&= p\Big{|}\sum\limits_{i=1}^{m}(\sigma^{p-1}-Q_i^{p-1}) \nabla Q_i\Big{|}\\&
 \lesssim  \sum\limits_{i=1}^{m}(\sigma^{p-1}-Q_i^{p-1}) Q_i\\&
 \lesssim f
,
 \end{aligned}
 \end{equation}
we obtain the following estimates: $$ \|f\|_{L^2}\gtrsim \|f\|_{H^{1}}. $$
 Moreover, since
 $$ \|f\|_{H^{1}} \|f\|_{H^{-1}} \geq \|f\|_{L^2}^2  \mbox{ and } \|f\|_{H^{-1}} \leq \|f\|_{L^2} \leq \|f\|_{H^{1}},$$ combining all these estimates, we finish the proof.
\end{proof}

Consider the function $\rho$ obtained from Proposition \ref{rh},  
we have
\begin{equation}\|\rho\|_{H^1}\lesssim \|f\|_{L^2}\lesssim F_{d,p}(R^{-\frac{d-1}{2}}e^{-R}). \end{equation}
 Furthermore, the following proposition holds to be true:
\begin{proposition}\label{999}
  For $\rho$ obtained from Proposition \ref{rh}, the function $u=\sigma+\rho$ satisfies
  \begin{equation}
\|\rho\|_{H^1}\gtrsim F_{d,p}(\| (-\Delta+1) u- |u|^{p-1}u\|_{H^{-1}}).
\end{equation}
\end{proposition}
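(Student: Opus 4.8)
The plan is to estimate the nonlinear residual $h := (-\Delta+1)u - |u|^{p-1}u$ from above and show it is of order $F_{d,p}(R^{-\frac{d-1}{2}}e^{-R})$, while simultaneously showing that $\|\rho\|_{H^1}$ is \emph{bounded below} by the same quantity; then the desired inequality $\|\rho\|_{H^1} \gtrsim F_{d,p}(\|h\|_{H^{-1}})$ follows by monotonicity of $F_{d,p}$. First I would compute $h$ directly. Plugging $u = \sigma + \rho$ into $(-\Delta+1)u - |u|^{p-1}u$ and using $(-\Delta+1)Q_i = Q_i^p$, one gets
\begin{equation*}
h = -\Big[(-\Delta+1)\rho - p\sigma^{p-1}\rho\Big] + f + N(\rho),
\end{equation*}
where $f = (\sum_i Q_i)^p - \sum_i Q_i^p$ and $N(\rho) = |\sigma+\rho|^{p-1}(\sigma+\rho) - \sigma^p - p\sigma^{p-1}\rho$ as in Proposition \ref{p1}. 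But $\rho$ was chosen in Proposition \ref{rh} to solve \eqref{3.45}, so the bracketed term is exactly $-(-\Delta+1)P_{\mathscr{F}^\perp}(-\Delta+1)^{-1}(f+N(\rho)) - p\sum_{i,j}\int \sigma^{p-1}\rho\, e_{ij}\,(-\Delta+1)e_{ij}$; hence
\begin{equation*}
h = P_{\mathscr{F}}(-\Delta+1)^{-1}\!\big((-\Delta+1)(f+N(\rho))\big) \cdot(\text{as an }H^{-1}\text{ element}) + \text{(the }\mathscr{F}\text{-correction terms)}.
\end{equation*}
More cleanly: $h$ lies (modulo the explicitly small error $N(\rho)$) in the finite-dimensional space $(-\Delta+1)\mathscr{F}$, and its $H^{-1}$ pairing against the $\partial_j Q_{i}$ is governed by $\int f\,\partial_j Q_i\,dx$ together with the corrector terms $\int \sigma^{p-1}\rho\,e_{ij}\,dx$, all of which I would estimate using Lemmas \ref{esti}, \ref{lemmay}, \ref{lem3.1} exactly as in the proof of Proposition \ref{prop3.}.

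The key point is that $\|h\|_{H^{-1}}$ is \emph{much smaller} than $\|f\|_{L^2}$. Indeed $h = P_{\mathscr{F}}(-\Delta+1)^{-1}(f+N(\rho))$ in the $H^1$ identification, and since $f$ decays like an interaction term, its $H^{-1}$-projection onto the $d\cdot m$-dimensional space spanned by the $\partial_j Q_i$ is controlled by the pairings $\langle f, \partial_j Q_i\rangle$, each of which is of order $R^{-\frac{d-1}{2}}e^{-R}$ (a pure one-exponential rate) by Lemma \ref{lemmay} and the interaction estimates, whereas $\|f\|_{L^2} \approx F_{d,p}(R^{-\frac{d-1}{2}}e^{-R})$ which in every relevant case ($1\le p<2$, or $p=2,d=1,2,3$) is \emph{strictly larger} (up to polynomial/log factors the rate is $e^{-\frac{p}{2}R}$ or $e^{-R}$ with a worse polynomial weight than $R^{-\frac{d-1}{2}}$) — wait, more carefully: for $1<p<2$, $\|f\|_{L^2}\approx R^{(\frac14-\frac p2)(d-1)}e^{-\frac p2 R}$ which dominates $e^{-R}$ since $\frac p2<1$; for $p=2$ the rates $|y|^{1/2}e^{-R}$, $R^{-1/4}e^{-R}$, $R^{-1}e^{-R}\ln^{1/2}R$ all dominate $R^{-\frac{d-1}{2}}e^{-R}$ in dimensions $1,2,3$ respectively. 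Together with $\|N(\rho)\|_{H^{-1}}\lesssim \|\rho\|_{H^1}^{\min\{2,p\}}\approx \|f\|_{L^2}^{\min\{2,p\}} = o_R(\|f\|_{L^2})$, this gives
\begin{equation*}
\|h\|_{H^{-1}} \lesssim R^{-\frac{d-1}{2}}e^{-R} + \|\rho\|_{H^1}^{\min\{2,p\}} = o_R\big(\|f\|_{L^2}\big) = o_R(\|\rho\|_{H^1}),
\end{equation*}
using $\|\rho\|_{H^1}\approx \|f\|_{L^2}$ from Proposition \ref{rh}.

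Finally, to close the argument I need the reverse-type bound: $\|\rho\|_{H^1}\gtrsim F_{d,p}(\|h\|_{H^{-1}})$. Having shown $\|h\|_{H^{-1}}\lesssim R^{-\frac{d-1}{2}}e^{-R}$, and that $F_{d,p}$ is increasing on the relevant range of arguments (for $R\ge\bar R$), I get $F_{d,p}(\|h\|_{H^{-1}}) \lesssim F_{d,p}(C R^{-\frac{d-1}{2}}e^{-R}) \lesssim F_{d,p}(R^{-\frac{d-1}{2}}e^{-R}) \approx \|f\|_{L^2} \approx \|\rho\|_{H^1}$, where the last steps use Proposition \ref{rh} and Lemmas \ref{lem2-2}, \ref{lem1-4} (via the argument in the proof of Theorem \ref{theorem1.2} identifying $\|f\|_{L^2}\approx F_{d,p}(R^{-\frac{d-1}{2}}e^{-R})$); monotonicity of $F_{d,p}$ up to constants is elementary from the case definitions in \eqref{mainnn}. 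The main obstacle I anticipate is the bookkeeping in bounding $\|h\|_{H^{-1}}$: one must carefully track that the $\mathscr{F}$-projection really only produces the single-exponential rate $R^{-\frac{d-1}{2}}e^{-R}$ and that the corrector terms $\int\sigma^{p-1}\rho\, e_{ij}\,dx$ (which couple $\rho$ to the near-kernel) do not secretly contribute something of the \emph{same} size as $\|\rho\|_{H^1}$ — this requires using $\rho\in\mathscr{F}^\perp$ to extract a gain, precisely the computation already done in the proof of Proposition \ref{prop3.}, so I would reuse \eqref{4.111}–\eqref{3.36} verbatim. A secondary point is to double-check that $N(\rho)$, which is only $O(|\rho|^{\min\{2,p\}})$ and so merely $o_R(\|\rho\|_{H^1})$ rather than negligible against $R^{-\frac{d-1}{2}}e^{-R}$ itself, is nonetheless $o_R(\|f\|_{L^2})$ — which holds because $\min\{2,p\}>1$ and $\|\rho\|\approx\|f\|_{L^2}\to 0$.
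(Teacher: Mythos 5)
Your overall route is the same as the paper's: substitute the fixed-point equation \eqref{3.45} into $h=(-\Delta+1)u-|u|^{p-1}u$ so that $h$ reduces to the $\mathscr{F}$-part of $(-\Delta+1)^{-1}(f+N(\rho))$ plus the corrector coefficients $\int\sigma^{p-1}\rho\,e_{ij}\,dx$, bound these by $R^{-\frac{d-1}{2}}e^{-R}$ via \eqref{4.111}, \eqref{3.35}, \eqref{3.36}, and conclude by $\|f\|_{L^2}\approx F_{d,p}(R^{-\frac{d-1}{2}}e^{-R})$ and monotonicity of $F_{d,p}$ (your sign slip in the formula for $h$ is harmless). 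But two steps are not closed as written. First, your bound $\|h\|_{H^{-1}}\lesssim R^{-\frac{d-1}{2}}e^{-R}+\|\rho\|_{H^1}^{\min\{2,p\}}$ does not yield the assertion "having shown $\|h\|_{H^{-1}}\lesssim R^{-\frac{d-1}{2}}e^{-R}$" in the whole range: for $1<p<\sqrt{2}$ one has $\|\rho\|_{H^1}^{p}\approx R^{p(\frac14-\frac p2)(d-1)}e^{-\frac{p^2}{2}R}\gg R^{-\frac{d-1}{2}}e^{-R}$, and then $F_{d,p}(\|h\|_{H^{-1}})$ would be of order $\|\rho\|_{H^1}^{p^2/2}\gg\|\rho\|_{H^1}$, so the conclusion fails with the crude $H^{-1}$ bound on $N(\rho)$. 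The point (which the paper uses, and which you gesture at but do not implement) is that $N(\rho)$ never needs to be measured globally in $H^{-1}$: it enters only through its pairings with the $\nabla Q_i$, where the splitting in \eqref{3.36} gives the quadratic bound $\|\rho\|_{H^1}^{2}$, and $\|\rho\|_{H^1}^{2}=o_R(R^{-\frac{d-1}{2}}e^{-R})$ for every $p>1$.

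Second, the step "$\|\rho\|_{H^1}\approx\|f\|_{L^2}$ from Proposition \ref{rh}" is not what that proposition gives: it gives $\|\rho\|_{H^1}\approx\|P_{\mathscr{F}^\perp}(-\Delta+1)^{-1}f\|_{H^1}$, whose square is $\|f\|_{H^{-1}}^2-\|P_{\mathscr{F}}(-\Delta+1)^{-1}f\|_{H^1}^2$. The direction you actually need is the lower bound $\|\rho\|_{H^1}\gtrsim\|f\|_{L^2}$, and for a general function $\|\cdot\|_{H^{-1}}$ can be much smaller than $\|\cdot\|_{L^2}$, so this is not automatic. The paper closes this with a dedicated observation: $|\nabla f|\lesssim f$ pointwise, hence $\|f\|_{H^1}\approx\|f\|_{L^2}$, and then $\|f\|_{H^1}\|f\|_{H^{-1}}\geq\|f\|_{L^2}^2$ forces $\|f\|_{H^{-1}}\approx\|f\|_{L^2}$; combined with $\|P_{\mathscr{F}}(-\Delta+1)^{-1}f\|_{H^1}=O(R^{-\frac{d-1}{2}}e^{-R})=o_R(\|f\|_{L^2})$ this yields $\|\rho\|_{H^1}\gtrsim\|f\|_{L^2}$. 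Both gaps are repairable with the estimates you already cite, but as written the proof is incomplete precisely at these two points.
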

\begin{proof}
Direct calculation yields
\begin{equation}
\begin{aligned}
  (-\Delta+1) u- |u|^{p-1}u&=- (-\Delta+1)P_{\mathscr{F}}(-\Delta+1)^{-1} (f+N(\rho)) \\&+ p \sum\limits_{i,j}\int_{\mathbb{R}^d} \sigma^{p-1} \rho e_{ij} dx  (-\Delta+1) e_{ij}.
\end{aligned}
\end{equation}

Since $\{e_{ij}\}$ can be obtain by Schmidt orthogonalization of  \( \{ \partial_j Q_i \}_{ 1 \leq j \leq d, 1 \leq i \leq m} \), and $ \rho\in \mathscr{F}^\perp$,  as shown in equation \eqref{4.111} and \eqref{3.35}, we have
\begin{equation}
\begin{aligned}
\sum_{i,j } |\int_{\mathbb{R}^d} \sigma^{p-1} \rho e_{ij} dx| &\approx \sum_{i} |\int_{\mathbb{R}^d} (\sigma^{p-1}-Q_i^{p-1}) \rho \nabla Q_i dx|\\&
\lesssim
\|\rho\|_{H^1}^{p+1}+ e^{-(1-\frac{p(p-1)}{4}) R}\|\rho\|_{H^1}^{p-1}\\&
=o_R(R^{-\frac{d-1}{2}}e^{-R}).
\end{aligned}
\end{equation}
While  \eqref{3.36} implies
\begin{equation}
\begin{aligned}
 \|(-\Delta+1)P_{\mathscr{F}}(-\Delta+1)^{-1} &(f+N(\rho))\|_{H^{-1}} 
 = \|P_{\mathscr{F}}(-\Delta+1)^{-1} (f+N(\rho))\|_{H^{1}}\\&
 \approx \sum_{i} |\int_{\mathbb{R}^d} (f+N(\rho))\nabla Q_i dx|\\&
 \lesssim  \sum_{i} |\int_{\mathbb{R}^d}  f Q_idx|+\|\rho\|_{H^1}^2\\&
 \lesssim \sum\limits_{\substack{i,j \\ i\neq j}} |\int_{\mathbb{R}^d}  Q_i^p Q_j dx|+\|\rho\|_{H^1}^2\\&
  \approx   
R^{-\frac{d-1}{2}}e^{-R}.
\end{aligned}
\end{equation}
Therefore,  we must have
\begin{equation}
\begin{aligned}
\| (-\Delta+1) u- |u|^{p-1}u\|_{H^{-1}}
\lesssim R^{-\frac{d-1}{2}}e^{-R}.
\end{aligned}
\end{equation}
As a consequence,
\begin{equation}\label{sha}
\begin{aligned}
\|\rho\|_{H^1}&\gtrsim\| f\|_{H^{-1}}- O(R^{-\frac{d-1}{2}}e^{-R})\\& \gtrsim 
\| f\|_{L^2}- O(R^{-\frac{d-1}{2}}e^{-R})\\& \gtrsim 
F_{d,p}(R^{-\frac{d-1}{2}}e^{-R})\\&\gtrsim F_{d,p}(\| (-\Delta+1) u- |u|^{p-1}u\|_{H^{-1}}).
\end{aligned}
\end{equation}
\end{proof}
To finish the proof of Theorem \ref{sharp}, we still need to show:
\begin{lemma}\label{ex}
For $\rho$ obtained from Proposition \ref{rh}, the function $u=\sigma+\rho$ satisfies
  \begin{equation}
   \inf_{ \{x_k\} \subset \mathbb{R}^d} \|u-\sum_{k=1}^{m}Q(\cdot+x_k)\|_{H^1}\approx \|\rho\|_{H^1} .
  \end{equation}
\end{lemma}
\begin{proof}
Suppose there exists $u=\sigma'+\rho'$ such that $\|\rho'\|_{H^1}\leq \|\rho\|_{H^1}$.  
Then :
\begin{equation}\label{3.588}
\|\rho'\|_{H^1}^2=\|\sigma-\sigma'\|_{H^1}^2+\|\rho \|_{H^1}^2+2(\rho, \sigma-\sigma'  )_{H^1}.
\end{equation}
Without loss of generality, we can assume that   
there exist $\{x_i\}\subset \mathbb{R}^d$ with $|x_i|$ small enough, such that
$$\sigma'=\sum_{i=1}^{m}Q'_i(x),\quad Q'_i(x)=Q_i(x+x_i).$$ Note that
$$\|\sigma-\sigma'\|_{H^1}^2=\sum\limits_{i=1}^{m}\|Q_i-Q_i'\|_{H^1}^2 +\sum\limits_{\substack{i,j \\ i\neq j}}(Q_i-Q_i', Q_j-Q_j')_{H^1} .$$
Consider the smooth functions $$F_i(y):=\|Q_i(x)-Q_i(x+y)\|_{H^1}^2, G_{i,j}(y,z):= ( Q_i(x)-Q_i(x+y), Q_j(x)-Q_j(x+z) )_{H^1}.$$
By Taylor expansion of \( F_i \) and \( G_{i,j} \) at zero, we obtain the approximations:
\begin{equation*}
\quad\|Q_i-Q_i'\|_{H^1}^2 \approx |x_i|^2,   \text{~and~}  ( Q_i - Q'_i, Q_j - Q'_j)_{H^1} = o_{R}(1)(|x_i|^2 + |x_j|^2), ~~\forall i\neq j.
\end{equation*}
By the triangle inequality, we obtain the bound
\[ \sum\limits_{i=1}^{m}|x_i| \lesssim
\| \sigma - \sigma' \|_{H^1} \leq 2 \|\rho\|_{H^1}.
\]

Similarly, we have the following estimates:
\[
\| Q_i + x_i \cdot \nabla Q_i - Q_i' \|_{H^1}^2 \lesssim |x_i|^3.
\]
Since \( \rho \in \mathscr{F}^\perp \), 
 we have:
\[
\begin{aligned}
( \rho, \sigma - \sigma')_{H^1} &=(\rho, \sum\limits_{i=1}^{m}(Q_i + x_i \cdot \nabla Q_i - Q_i' ))_{H^1} \\
&\lesssim\|\rho\|_{H^1}^{2.5}.
\end{aligned}
\]
Thus, we obtain
$
\| \rho'\|_{H^1} \approx \|\rho\|_{H^1}
$
from \eqref{3.588}, this complete the proof.
\end{proof}

\begin{proof}[Proof of Theorem \ref{sharp}]
Combining Proposition \ref{999} and Lemma \ref{ex}, we deduce that the function $u$ satisfies the conditions and the conclusion stated in Theorem \ref{sharp}. Moreover, the function $u^+$ also continues to fulfill the conditions and the conclusion of Theorem \ref{sharp}. To verify this, recall that
\begin{equation}
\begin{aligned}
 (-\Delta+1) u- |u|^{p-1}u&=- (-\Delta+1)P_{\mathscr{F}}(-\Delta+1)^{-1} (f+N(\rho)) \\&+ p \sum\limits_{i,j}\int_{\mathbb{R}^d} \sigma^{p-1} \rho e_{ij} dx  (-\Delta+1) e_{ij}.
\end{aligned}
\end{equation}
Testing the equation with \( u^{-} \), by the estimates in the proof of Proposition \ref{999}, we obtain :
\[
\begin{aligned}
\|u^{-}\|_{H^1}^2\lesssim\| u^{-}\|_{H^1}^{p+1}+R^{-\frac{d-1}{2}}e^{-R}\| u^{-}\|_{H^1},
\end{aligned}
\]
thus $\|u^{-}\|_{H^1}\lesssim R^{-\frac{d-1}{2}}e^{-R}$.
Next, consider the difference:
\[
\begin{aligned}
 \Big{|}\| (\Delta - 1) u^+ &+ |u^+|^{p-1} u^+ \|_{H^{-1}} - \| (\Delta - 1) u + |u|^{p-1} u \|_{H^{-1}}  \Big{|}\\
& \leq \| u^{-} \|_{H^1} + \| |u^+|^{p-1} u^+ - |u|^{p-1} u \|_{H^{-1}} \\
& \lesssim \| u^{-} \|_{H^1} + \| u^{-} \|_{H^1}^p \\
& \lesssim R^{-\frac{d-1}{2}}e^{-R}.
\end{aligned}
\]
Therefore, 
\begin{equation}
\begin{aligned}
  \inf_{\{y_k\} \subset \mathbb{R}^d} \|u^+ - \sum_{k=1}^{m} Q(\cdot + y_k)\|_{H^1}& \gtrsim 
  F_{d, p}(R^{-\frac{d-1}{2}}e^{-R})-CR^{-\frac{d-1}{2}}e^{-R}\\& \gtrsim 
  F_{d, p}( \| (\Delta - 1) u^+ + |u^+|^{p-1} u^+\|_{H^{-1}}).
\end{aligned}
\end{equation}
\end{proof}

\subsection{Some Corollaries}

By the classification of positive solutions to \eqref{cfe} (cf. Kwong \cite{Kwong1989} ), we immediately obtain the following corollary for non-negative functions:

 \begin{corollary}\label{corollary1.1}
  For $d\geq 1$ and $1< p<2^*-1$, if $u\geq 0$ belongs to $H^1$  and satisfies $$(m-\frac{1}{2})\|Q\|_{H^1}^2\leq \|u\|_{H^1}^2\leq (m+\frac{1}{2})\|Q\|_{H^1}^2,$$
   then for some  constant $C(m, d, p)$ depending on $m, d$ and $p$, the following estimate holds:
  \[    \inf_{\{y_k\}\subset \mathbb{R}^d} \|u-\sum_{k=1}^{m}Q(\cdot+y_k)\|_{H^1} \leq C(m, d, p)F_{d,p}(\|\Delta u-u+|u|^{p-1}u\|_{H^{-1}}).  \]
\end{corollary}
In the one-dimensional case,  the ground state is given by $$Q(x)=\Big{(}\frac{p+1}{2\cosh^2(\frac{p-1}{2} x)}\Big{)}^{\frac{1}{p-1}},$$ 
 according to Cazenave \cite[Theorem 8.16]{Cazenave2003}, we have the following corollary:
\begin{corollary}\label{corollary1.2}
 For $d=1$ and $p>1$,  if $u\in H^1$ satisfies $$(m-\frac{1}{2})\|\Big{(}\frac{p+1}{2\cosh^2(\frac{p-1}{2} x)}\Big{)}^{\frac{1}{p-1}}\|_{H^1}^2\leq \|u\|_{H^1}^2\leq (m+\frac{1}{2})\|\Big{(}\frac{p+1}{2\cosh^2(\frac{p-1}{2} x)}\Big{)}^{\frac{1}{p-1}}\|_{H^1}^2,$$
   then for some  constant $C(m,p)$ depending on $m$ and $p$, the following estimate holds:
  \[    \inf_{\{y_k\}\subset  \mathbb{R}} \|u-\sum_{k=1}^{m}Q(\cdot+y_k)\|_{H^1} \leq C(m, p)F_{1,p}(\|\Delta u-u+|u|^{p-1}u\|_{H^{-1}}).  \]
\end{corollary}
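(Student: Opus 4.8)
The plan is to argue by contradiction, turning the statement into a Palais--Smale compactness argument combined with the local estimate of Theorem~\ref{theorem1.2} and the one--dimensional classification of solutions. Suppose no admissible constant $C(m,p)$ exists. Then there is a sequence $\{u_n\}\subset H^1(\mathbb R)$ with $(m-\tfrac12)\|Q\|_{H^1}^2\le\|u_n\|_{H^1}^2\le(m+\tfrac12)\|Q\|_{H^1}^2$ and
\[
\inf_{\{y_k\}\subset\mathbb R}\Big\|u_n-\sum_{k=1}^m Q(\cdot+y_k)\Big\|_{H^1} > n\, F_{1,p}\big(\|\Delta u_n-u_n+|u_n|^{p-1}u_n\|_{H^{-1}}\big).
\]
The norm constraint makes $\{u_n\}$ bounded in $H^1$, so the left-hand side is dominated by $\|u_n\|_{H^1}+m\|Q\|_{H^1}\le C_0(m)$; since $F_{1,p}$ is continuous, increasing, with $F_{1,p}(0^+)=0$ and bounded below by a positive constant away from $0$, this forces $\|\Delta u_n-u_n+|u_n|^{p-1}u_n\|_{H^{-1}}\to 0$. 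Hence $\{u_n\}$ is a bounded Palais--Smale sequence.

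Next I would apply Theorem~\ref{theorem1.3}: passing to a subsequence, there are an integer $m'\ge 1$ (it is at least $1$ because $\|u_n\|_{H^1}^2$ is bounded away from $0$ when $m\ge 1$), nonzero solutions $v^1,\dots,v^{m'}$ of $\Delta v-v+|v|^{p-1}v=0$ in $H^1(\mathbb R)$, and translations $y_{n,k}$ with $\min_{i\ne j}|y_{n,i}-y_{n,j}|\to\infty$ such that $u_n-\sum_{k=1}^{m'}v^k(\cdot+y_{n,k})\to 0$ in $H^1$ and $\|u_n\|_{H^1}^2\to\sum_{k=1}^{m'}\|v^k\|_{H^1}^2$. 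In one dimension every nonzero $H^1$ solution of this equation equals $\pm Q(\cdot-a)$ for some $a\in\mathbb R$ (the classification underlying the explicit formula for $Q$, cf.\ Kwong~\cite{Kwong1989} and Cazenave~\cite[Theorem~8.16]{Cazenave2003}): an energy/ODE argument rules out sign changes, and $u\mapsto -u$ produces the negative branch from Kwong's positive one. Thus $v^k=\epsilon_k Q(\cdot-a_k)$ with $\epsilon_k\in\{+1,-1\}$, so $\|v^k\|_{H^1}=\|Q\|_{H^1}$ for every $k$, whence $\|u_n\|_{H^1}^2\to m'\|Q\|_{H^1}^2$. Combined with the constraint this gives $m-\tfrac12\le m'\le m+\tfrac12$, and since $m'$ is an integer, $m'=m$.

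Now set $\widetilde y_{n,k}:=y_{n,k}-a_k$ and $\rho_n:=u_n-\sum_{k=1}^m\epsilon_k Q(\cdot+\widetilde y_{n,k})$, so that $\rho_n\to 0$ in $H^1$ and $\min_{i\ne j}|\widetilde y_{n,i}-\widetilde y_{n,j}|\to\infty$; for $n$ large, $\|\rho_n\|_{H^1}\le 1/\bar R$ and $\min_{i\ne j}|\widetilde y_{n,i}-\widetilde y_{n,j}|\ge\bar R$, which places $u_n$ within the hypotheses of the local estimate. Applying Theorem~\ref{theorem1.2} then bounds $\inf_{\{y_k\}}\|u_n-\sum_{k=1}^m\epsilon_k Q(\cdot+y_k)\|_{H^1}$ by $C(m,p)F_{1,p}(\|\Delta u_n-u_n+|u_n|^{p-1}u_n\|_{H^{-1}})$, contradicting the choice of $u_n$ and closing the argument. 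The one genuinely delicate point—and the main obstacle—is the presence of the per-bump signs $\epsilon_k$: Theorem~\ref{theorem1.2} is stated for all-positive sums, so one must use its variant for signed sums $\sum_k\epsilon_k Q(\cdot+y_k)$ (equivalently, the infimum in the corollary is to be read over signed sums; the all-positive statement is the correct one precisely when $u\ge 0$, which is Corollary~\ref{corollary1.1}). Extending Theorem~\ref{theorem1.2} to signed sums should be routine, since the modulation/coercivity scheme of Section~3 only uses the coercivity of the linearised operator about a single bump (insensitive to a sign) together with the size of the exponential interaction terms $\|(\epsilon_iQ_i+\epsilon_jQ_j)^p-\epsilon_iQ_i^p-\epsilon_jQ_j^p\|_{L^2}$, which is $\approx F_{d,p}(\cdot)$ irrespective of the $\epsilon_k$; verifying this sign-insensitivity carefully is the step requiring attention, while the rest is a direct concatenation of Theorems~\ref{theorem1.3} and~\ref{theorem1.2} with the one-dimensional classification.
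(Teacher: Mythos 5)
Your proof takes the same route as the paper (contradiction, Palais--Smale compactness via Theorem~\ref{theorem1.3}, the one--dimensional classification, then the local estimate of Theorem~\ref{theorem1.2}), and you correctly catch something the paper glosses over: in $d=1$ the classification gives $\pm Q(\cdot+y)$, not just $Q(\cdot+y)$ as the paper's one--line proof claims. Indeed $-Q$ solves the equation since $|u|^{p-1}u$ is odd, and Cazenave's Theorem~8.16 classifies solutions only up to phase rotation, which for real--valued functions yields $\pm Q(\cdot+y)$. This is not cosmetic: if some profiles $v^k$ equal $-Q(\cdot+y_k)$, then $u_n$ is close to a signed sum and the infimum over positive sums in the corollary need not shrink. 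For example, $u = Q(\cdot-R)-Q(\cdot+R)$ with $R$ large has $\|\Delta u - u + |u|^{p-1}u\|_{H^{-1}}\to 0$ and $\|u\|_{H^1}^2\to 2\|Q\|_{H^1}^2$, yet $\inf_{y_1,y_2}\|u - Q(\cdot+y_1) - Q(\cdot+y_2)\|_{H^1}$ stays bounded away from $0$. So you are right that either the infimum should range over signed sums $\sum_k\epsilon_k Q(\cdot+y_k)$, $\epsilon_k\in\{\pm 1\}$, or the hypothesis $u\ge 0$ must be retained (reducing to Corollary~\ref{corollary1.1}).

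Your remaining assertion---that Theorem~\ref{theorem1.2} extends to signed sums---is the step that needs to be written out, and you were right to flag it. It is plausible: Lemma~\ref{reduction} and Proposition~\ref{p1} carry over since they only see $|\sigma|^{p-1}\ge 0$, and in the lower bound of Proposition~\ref{prop3.} the leftmost point in $d=1$ has a unique nearest neighbour, so the dominant nearest--neighbour interaction term retains a definite sign independently of the $\epsilon_k$, preserving the estimate up to an overall sign. As written, though, the paper's proof of this corollary contains a genuine gap at exactly this point, and your version is the more careful of the two.
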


\begin{proof}[Proof of Corollary \ref{corollary1.1}]
 This proof is standard. Suppose, on the contrary, that for each $n$, there exists non-negative function $u_n$, such that $$(m-\frac{1}{2})\|Q\|_{H^1}^2\leq \|u_n\|_{H^1}^2\leq (m+\frac{1}{2})\|Q\|_{H^1}^2,$$ and
\begin{equation}\label{mddd}
  \inf_{ \{y_k\} \subset \mathbb{R}^d} \|u_n-\sum_{k=1}^{m}Q(\cdot+y_k)\|_{H^1} > nF_{d,p}(\|\Delta u_n-u_n+|u_n|^{p-1}u_n\|_{H^{-1}}).
\end{equation}
 Thus, $\{u_n\}$ is a Palais–Smale  sequence for the functional associated with  the Euler-Lagrange  equation $\Delta u-u+|u|^{p-1}u$. Theorem \ref{theorem1.3} yields that, up to a subsequence,
 there exist  a sequence $\{v^{k}\}_{k=1}^{m}$ such that $$\Delta v^{k}-v^{k}+|v^{k}|^{p-1}v^{k}=0,$$    and a sequence of points $y_{n,k}$ such that
   \[\min_{i\neq j}|y_{n,i}-y_{n,j}| \to +\infty,\]
and
   $$ u_n \to \sum_{k=1}^{m} v^{k}(\cdot+y_{n,k})\text{~with } \|u_{n}\|_{H^1}^2 \to \sum_{k=1}^{m} \|v^{k}\|_{H^1}^2\text{~as~} n\to+\infty .$$
 In particular, since $u_n$ are non-negative, and $u_{n}(\cdot-y_{n,k})\rightharpoonup  v^{k}$, it follows that $ v^{k}$  are non-negative. By the well-known uniqueness result of
 \cite{Kwong1989}, we have $v^{k}=Q(\cdot+y_{k})$ for some $y_{k}\in \mathbb{R}^d$.
 
Therefore, the assumption in \eqref{mddd} contradicts Theorem \ref{theorem1.2}.

\end{proof}

\begin{proof}[Proof of Corollary \ref{corollary1.2} ]
The proof is similar to that of Corollary \ref{corollary1.1}. The only difference is that in one dimension, every real-valued solution to the equation  $$\frac{d^2u}{d x^2} -u+|u|^{p-1}u=0$$
is of the form $Q(\cdot+y)$ for some $y\in \mathbb{R}$, as shown in \cite[Theorem 8.16]{Cazenave2003}. Therefore, we do not require the solution to be non-negative.
\end{proof}
\section{The complex-valued case}
The problem becomes more complicated in the complex-valued case. Even if $p>2$, we still need to add restrictions on  the phase difference. It remains uncertain whether the restriction can be removed.

\subsection{Finite number of ground states with cubic nonlinearity}
 To begin with, we consider the cubic nonlinearity,  i.e., $p=3$.
 Specifically, we have the following result:
\begin{theorem}\label{theorem1.4}
  Let $4>d\geq 1$ and   $p=3$. For each $m$ and $c\in (0, 1]$, there exists $\widetilde{R}=\widetilde{R}(m,c,p)>0$  such that
  if $u\in H^{1}(\mathbb{R}^d; \mathbb{C})$ satisfies
   $$\|u-\sum_{k=1}^{m}e^{i\theta_k}Q(\cdot+\widetilde{y}_k)\|_{H^1}\leq \frac{1}{\widetilde{R}},~\text{for ~some~} \{\widetilde{y}_k\}\subset \mathbb{R}^d \text{~with~} \min_{i\neq j}|\widetilde{y}_i-\widetilde{y}_j|\geq \widetilde{R},$$
and  some $\theta_i\in [0, 2\pi)$ satisfying the phase restriction :
  \begin{equation}\label{arg}
             \cos|\theta_{i} -\theta_{j}|>c,~ \forall i\neq j \text{~~or~}  \cos|\theta_{i} -\theta_{j}|<-c,~ \forall i\neq j ,
           \end{equation}
then for some  constant $C(c,m,p)$ depending on $c, m$ and $p$, the following estimate holds:
 \begin{equation}\label{cpl}
 \inf_{\{y_k\}\subset \mathbb{R}^d,\theta_k\in \mathbb{R}} \|u-\sum_{k=1}^{m}e^{i\theta_k}Q(\cdot+y_k)\|_{H^1} \leq C(c,m,p)\|\Delta u-u+|u|^{2}u\|_{H^{-1}}.
 \end{equation}
  
\end{theorem}
\begin{proof}[Proof of Theorem \ref{theorem1.4}]
\hfill
\par
\emph{Step 1:}
By the assumption of Theorem \ref{theorem1.4}, there exist \( z_i \in \mathbb{C} \) and \( y_i \in \mathbb{R}^d \) such that
\[
\sum_{i=1}^{m} z_i Q_i: = \sum_{i=1}^{m} z_i Q(\cdot + y_i) 
\]
achieves
\[
\inf_{\{z_i\} \subset \mathbb{C}^d, \{y_i\} \subset \mathbb{R}^d} \| u - \sum_{i=1}^{m} z_i Q(\cdot + y_i) \|_{H^1}.
\]
In particular, it holds that
\[
|z_i| = 1 + o_{\widetilde{R}}(1), \quad  R:=\min_{i \neq j} |y_i - y_j| \geq \frac{\widetilde{R}}{2},
\]
and either
\[
\text{Re}(z_j \bar{z}_i) > \frac{c}{2} \quad \text{for all } i \neq j, 
\]
or
\[
\text{Re}(z_j \bar{z}_i) < -\frac{c}{2} \quad \text{for all } i \neq j.
\]
Furthermore, the function \( \rho := u - \sum\limits_{i=1}^{m} z_i Q_i \) satisfies the following orthogonality conditions:
\[
\int_{\mathbb{R}^d}  (\rho Q_i + \nabla \rho \cdot \nabla Q_i)  dx= 0, \quad \int_{\mathbb{R}^d}  \rho Q_i^3  dx= 0, \quad \text{Re} \int_{\mathbb{R}^d}  \rho \bar{z_i} Q_i^2 \nabla Q_i dx = 0.
\]
By direct calculations, we have
\begin{equation}\label{com}
\begin{aligned}
\Delta u-&u+|u|^{2}u=(\Delta-1) \rho-\sum_{i=1}^{m}z_iQ_i^3+|\sum_{i=1}^{m} z_iQ_i+\rho|^2(\sum_{i=1}^{m} z_iQ_i+\rho)\\&
=(\Delta-1) \rho+\sum_{i=1}^{m}(|z_i|^2-1)z_iQ_i^3+\sum\limits_{\substack{i,j \\ i\neq j}}|z_i|^2z_jQ_i^2Q_j+\sum\limits_{k=1}^{m}\sum\limits_{\substack{i,j \\ i\neq j}}\text{Re}( z_i \bar{z}_j) z_k Q_iQ_jQ_k\\&
+\sum_{i=1}^{m}|z_i|^2Q_i^2\rho+\sum\limits_{\substack{i,j \\ i\neq j}}\text{Re}( z_i \bar{z}_j) Q_iQ_j\rho+\sum_{i,j}2\text{Re}(\rho \bar{z}_i)z_jQ_iQ_j+O(\rho^2).
\end{aligned}
\end{equation}
By testing the equation with \( \bar{\rho} \) and taking the real part, we obtain:
\begin{equation}\label{5.2}
\begin{aligned}
\|\rho\|_{H^1}^2&\lesssim \|\Delta u-u+|u|^{2}u\|_{H^{-1}}\|\rho\|_{H^1} +o_{\widetilde{R}}(\rho^2) +\sum_{i=1}^{m}\int_{\mathbb{R}^d} |z_i|^2Q_i^2|\rho|^2 dx\\& + \sum\limits_{\substack{i,j \\ i\neq j}} |y_{i}-y_{j}|^{-\frac{d-1}{2}}e^{- |y_{i}-y_{j}|}  \|\rho\|_{H^1} +\sum_{i=1}^{m} 2|\text{Re}(\rho \bar{z}_i)|^2 Q_i^2.
\end{aligned}
\end{equation}

\par
\emph{Step 2:}
We claim that 

\begin{equation}\label{5.333}
\begin{aligned}
\sum_{i=1}^{m}\int_{\mathbb{R}^d}  |z_i|^2Q_i^2|\rho|^2 dx&+ \sum_{i=1}^{m} 2|\text{Re}(\rho \bar{z_i})|^2 Q_i^2\\&
=\sum_{i=1}^{m} \int_{\mathbb{R}^d}  3|\text{Re}(\rho \bar{z_i})|^2 Q_i^2 + |\text{Im}(\rho \bar{z_i})|^2 Q_i^2
\\&
\leq \frac{3+o_{\widetilde{R}}(1)}{3+\kappa} \|\rho\|_{H^1}^2.
\end{aligned}
\end{equation}
Therefore, we deduce from \eqref{5.2} that
\begin{equation}\label{5.4}
  \|\rho\|_{H^1}\lesssim \|\Delta u-u+|u|^{2}u\|_{H^{-1}}+R^{-\frac{(d-1)}{2}}e^{-R}.
\end{equation}

In fact, by consider the cut-off function  $\Phi_i =1$ in $B(-y_i,\frac{\sqrt{R}}{3})$, and $\Phi_i=0$ in $B(-y_i,\frac{\sqrt{R}}{2})^c$. We have
\begin{equation}\label{3-5}
\begin{aligned}
  \sum_{i=1}^{m}\int_{\mathbb{R}^d}  |\text{Re}(\rho \bar{z_i})|^2 Q_i^2 dx &\leq     \sum_{i=1}^{m}\int_{\mathbb{R}^d}  |\Phi_i|^2Q_i^2|\text{Re}(\rho \bar{z_i})|^2 dx +o_{\widetilde{R}}(1)\|\rho\|_{H^1}^2
  .
  \end{aligned}
  \end{equation}
We deduce from \eqref{gap} and the orthogonality conditions that
\begin{equation}
\begin{aligned}
\|\Phi_i \text{Re}(\rho \bar{z_i})\|_{H^1}^2&\geq (3+\kappa) \int_{\mathbb{R}^d }  Q_i^{2} |\Phi_i\text{Re}(\rho \bar{z_i})|^2 dx\\&- C\Big{|}\int_{\mathbb{R}^d }  Q_{i}^{3}(1-\Phi_i)\text{Re}(\rho \bar{z_i}) dx\Big{|}^2 - C\Big{|}\int_{\mathbb{R}^d }  Q_{i}^{2}\nabla Q_{i}(1-\Phi_i)\text{Re}(\rho \bar{z_i}) dx\Big{|}^2
\\
&
\geq (3+\kappa) \int_{\mathbb{R}^d } \Phi_i^2 Q_i^{2} |\text{Re}(\rho \bar{z_i})|^2 dx-o_{\widetilde{R}}(1)\|\rho\|_{H^1}^2.
\end{aligned}
\end{equation}
Since $|\nabla \Phi_i|\leq \frac{C}{\sqrt{R}}= o_{\widetilde{R}}(1)$,   easy to check
\begin{equation}\label{3-9}
\sum_{i=1}^{m}\|\Phi_i \text{Re}(\rho \bar{z_i})\|_{H^1}^2
 \le\sum_{i=1}^{m} \int_{\mathbb{R}^d } (|\nabla \text{Re}(\rho \bar{z_i})|^2 \Phi_i^2 +|\text{Re}(\rho \bar{z_i})|^2 \Phi_i^2)dx+o_{\widetilde{R}}(1)\|\rho\|_{H^1}^2.
\end{equation}
As the various  functions $\Phi_i $ have disjoint supports, therefore
\begin{equation}\label{3-10}
\sum_{i=1}^m \int_{\mathbb{R}^d } |\nabla \text{Re}(\rho \bar{z_i})|^2 \Phi_i^2 dx +\sum_{i=1}^m \int_{\mathbb{R}^d } | \text{Re}(\rho \bar{z_i})|^2 \Phi_i^2 dx\le \|\text{Re}(\rho \bar{z_i})\|_{H^1}^2.
\end{equation}
Thus, we can conclude from \eqref{3-5}-\eqref{3-10} that
\begin{align}\label{5.3333}
  \sum_{i=1}^{m}\int_{\mathbb{R}^d}  |\text{Re}(\rho \bar{z_i})|^2 Q_i^2 dx \le  \frac{1}{3+\kappa}\|\text{Re}(\rho \bar{z_i})\|_{H^1}^2 +o_{\widetilde{R}}(1)\|\rho\|_{H^1}^2.
\end{align}

As for the term 
$\sum\limits_{i=1}^{m} \int_{\mathbb{R}^d}  |\text{Im}(\rho \bar{z_i})|^2 Q_i^2 dx$, we deduce from
\eqref{gap} and the orthogonality conditions that
\begin{equation}
\begin{aligned}
\|\Phi_i \text{Im}(\rho \bar{z_i})\|_{H^1}^2&\geq 3 \int_{\mathbb{R}^d }  Q_i^{2} |\Phi_i\text{Im}(\rho \bar{z_i})|^2 dx- C\Big{|}\int_{\mathbb{R}^d }  Q_{i}^{3}(1-\Phi_i)\text{Im}(\rho \bar{z_i}) dx\Big{|}^2 
\\
&
\geq 3 \int_{\mathbb{R}^d } \Phi_i^2 Q_i^{2} |\text{Im}(\rho \bar{z_i})|^2 dx-o_{\widetilde{R}}(1)\|\rho\|_{H^1}^2.
\end{aligned}
\end{equation}
Similar arguments yields that 
\begin{equation}\label{5.33333}
\sum\limits_{i=1}^{m} \int_{\mathbb{R}^d}  |\text{Im}(\rho \bar{z_i})|^2 Q_i^2  dx\le  \frac{1}{3}\|\text{Im}(\rho \bar{z_i})\|_{H^1}^2 +o_{\widetilde{R}}(1)\|\rho\|_{H^1}^2.
\end{equation}
And then \eqref{5.333} follows from  \eqref{5.3333} ,\eqref{5.33333} and the fact $|z_i| = 1 + o_{\widetilde{R}}(1)$.

\par
\emph{Step 3:}
Test the equation \eqref{com} by $Q_{i_0}$   we obtain:
\begin{equation}\label{5.11}
\begin{aligned}
|(|z_{i_0}|^2&-1)z_{i_0}\int_{\mathbb{R}^d} Q_{i_0}^4 dx+\sum_{j \neq {i_0}}(|z_{i_0}|^2z_j +2\text{Re}(z_{i_0}z_j) z_{i_0} )\int_{\mathbb{R}^d}  Q_{i_0}^3Q_j dx|\\
\lesssim &\|\Delta u-u+|u|^{2}u\|_{H^{-1}}+o_R(1) \|\rho\|_{H^1}+ o_R(R_{i_0}^{-\frac{d-1}{2}}e^{-R_{i_0}} ).
\end{aligned}
\end{equation}
Test the equation  \eqref{com} by $\bar{z}_{i_0}\nabla Q_{i_0}$ and take the real part  we obtain:
\begin{equation}\label{5.12}
\begin{aligned}
|3|z_{i_0}|^2\sum_{j\neq i_0}\int_{\mathbb{R}^d} & \text{Re}(z_j \bar{z}_{i_0})Q_{i_0}^2 Q_{j}\nabla Q_{i_0} dx|
\\&\lesssim \|\Delta u-u+|u|^{2}u\|_{H^{-1}}+o_R(1)\|\rho\|_{H^1} +o_R(R_{i_0}^{-\frac{d-1}{2}}e^{-R_{i_0}} ).
\end{aligned}
\end{equation}

Since
all the $\{\text{Re}(z_j \bar{z}_{i}) \}_{i\neq j}$ have the same sign, and $|\text{Re}(z_j \bar{z}_{i})|>\frac{c}{2}$, by Lemma \ref{point} and \ref{lemmay}, we may assume that 
\begin{equation}\label{5.13}
\begin{aligned}
R_1^{-\frac{(d-1)}{2}}e^{-R_1}&\lesssim
    \Big{|}\sum\limits_{j>1} \frac{y_{j}^{1}-y_{1}^1}{|y_{j}-y_{1}|} |y_{j}-y_{1}|^{-\frac{d-1}{2}} e^{-|y_{j}-y_{1}|} \Big{|}\\&\lesssim  |\sum_{j>1}\int_{\mathbb{R}^d} \text{Re}(z_j \bar{z}_{1})Q_{1}^2 Q_{j}\partial_1 Q_{1} dx|
\\&\lesssim \|\Delta u-u+|u|^{2}u\|_{H^{-1}}+o_R(1)\|\rho\|_{H^1} .
\end{aligned}
\end{equation}.

Then \eqref{5.11} yields that
\[  ||z_{1}|-1|\lesssim \|\Delta u-u+|u|^{2}u\|_{H^{-1}}+o_R(1)\|\rho\|_{H^1} . \]

By induction we obtain that
\begin{equation}\label{5.15}
R^{-\frac{(d-1)}{2}}e^{-R}\lesssim
    \|\Delta u-u+|u|^{2}u\|_{H^{-1}}+o_R(1)\|\rho\|_{H^1} .
\end{equation}
And
\begin{equation}\label{5.16}
\max_{i}||z_{i}|-1|\lesssim
    \|\Delta u-u+|u|^{2}u\|_{H^{-1}}+o_R(1)\|\rho\|_{H^1} .
\end{equation}

Therefore, we deduce from \eqref{5.4}, \eqref{5.15} and \eqref{5.16} that
\[ \max_{i}||z_{i}|-1|+ \|\rho\|_{H^1} \lesssim  \|\Delta u-u+|u|^{2}u\|_{H^{-1}}.\]
As a consequence, 
 \begin{equation}
 \inf_{\{y_k\}\subset \mathbb{R}^d,\theta_k\in \mathbb{R}} \|u-\sum_{k=1}^{m}e^{i\theta_k}Q(\cdot+y_k)\|_{H^1} \leq C(c,m,p)\|\Delta u-u+|u|^{2}u\|_{H^{-1}}.
 \end{equation}
\end{proof}

\subsection{A single ground state}
We emphasize that the restrictions on the phase difference and the value of
 $p,d$ are due to the interaction between different ground states. In the case of a single ground state, i.e.  $m=1$, the situation is much simpler:
\begin{theorem}\label{theorem1.5}
  Let $d\geq 1$ and $2^*-1>p>1$. There exists $\epsilon=\epsilon(p)>0$  such that
  if $u\in H^{1}(\mathbb{R}^d; \mathbb{C})$ satisfies
   $$\|u-e^{i\theta}Q(\cdot+\widetilde{y})\|_{H^1}\leq \epsilon
 \mbox{ for some } \theta\in [0, 2\pi) \mbox{ and } \widetilde{y} \in \mathbb{R}^d,$$
then we have the following estimate:
 \begin{equation}
 \inf_{y\in \mathbb{R}^d,\theta\in \mathbb{R}} \|u-e^{i\theta}Q(\cdot+y)\|_{H^1} \leq C(d, p)\|\Delta u-u+|u|^{p-1}u\|_{H^{-1}}.
 \end{equation}
\end{theorem}

As for the application to the  NLS equations, by the well-known result on the orbital stability of a single soliton wave in the $L^2$ subcritical case (cf. Weinstein \cite{Weinstein1985} and \cite{Weinstein1986}), we obtain the following corollary:
  \begin{corollary}\label{corollary1.3}
  For  $5>p>1$, $d=1$. Let $u_0\in H^{1}(\mathbb{R}; \mathbb{C})$ with $$\frac{1}{2}\|\Big{(}\frac{p+1}{2\cosh^2(\frac{p-1}{2} x)}\Big{)}^{\frac{1}{p-1}}\|_{H^1}^2\leq \|u_0\|_{H^1}^2\leq \frac{3}{2}\|\Big{(}\frac{p+1}{2\cosh^2(\frac{p-1}{2} x)}\Big{)}^{\frac{1}{p-1}}\|_{H^1}^2.$$ Then $u(t)$, the solution to the NLS equation with initial value
 $u_0$, satisfies
    \[    \inf_{y, \theta\in \mathbb{R}} \|u(t)-e^{i\theta} Q(\cdot+y)\|_{H^1} \leq C\|\Delta u_0-u_0+|u_0|^{p-1}u_0\|_{H^{-1}}.  \]
  \end{corollary}
\begin{proof}[Proof of  Theorem \ref{theorem1.5}]
Without loss of generality, up to a translation and  phase rotation,  we can  write $u=\alpha Q+\rho$, such that
$$\alpha \in\mathbb{R},~|\alpha-1|\lesssim \epsilon , ~\rho \in H^{1}(\mathbb{R}^d, \mathbb{C}) ~\text{with~}\|\rho\|_{H^1}\leq \epsilon,$$  and $\rho$ satisfies the orthogonality condition:
\begin{equation}\label{zj}
 \int_{\mathbb{R}^d}  (\rho Q + \nabla \rho \cdot \nabla Q) dx =\int_{\mathbb{R}^d}   \rho Q^p dx =0,~~  \text{Re} \int_{\mathbb{R}^d}   \rho   Q^{p-1} \nabla Q  dx=0.
\end{equation}
Let $\rho=\rho_1+i\rho_2$,  direct calculations yield
\begin{equation}\label{5.81}
\begin{aligned}
-\Delta u+u-|u|^{p-1}u&=-\Delta \rho+\rho-
p|\alpha Q|^{p-1}\rho_1-i|\alpha Q|^{p-1}\rho_2+(\alpha-\alpha^p) Q^p\\&+
\Big{[}
\alpha^p Q^p+p|\alpha Q|^{p-1}\rho_1+i|\alpha Q|^{p-1}\rho_2-|\alpha Q+\rho|^{p-1}(\alpha Q+\rho) \Big{]}.
\end{aligned}
\end{equation}
By applying a Taylor expansion, we have the following approximation:
\[ \Big{|}
\alpha^p Q^p+p|\alpha Q|^{p-1}\rho_1+i|\alpha Q|^{p-1}\rho_2-|\alpha Q+\rho|^{p-1}(\alpha Q+\rho) \Big{|}\lesssim |\rho|^{\min\{2,p\}}. \]
 Therefore, testing \eqref{5.81} with $\bar{\rho}$ and taking the real part yields:
\begin{equation}
\begin{aligned}
\|\rho\|_{H^1}^2 &\leq  \int_{\mathbb{R}^d} p|\alpha Q|^{p-1}\rho_1^2 dx+ \int_{\mathbb{R}^d}|\alpha Q|^{p-1}\rho_2^2dx +  C\|\rho\|_{H^1}^{\min\{3,p+1\}}\\&+  \|-\Delta u+u-|u|^{p-1}u\|_{H^{-1}}\|\rho\|_{H^1}.
\end{aligned}
\end{equation}
From \eqref{zj} and \eqref{gap}, we obtain:
\[ (p+\kappa)\int_{\mathbb{R}^d} Q^{p-1}\rho_1^2  dx\leq \|\rho_1\|_{H^1}^2, \quad  p \int_{\mathbb{R}^d} Q^{p-1}\rho_2^2 dx\leq \|\rho_2\|_{H^1}^2 . \]
Thus,  for sufficiently small $\epsilon$, we have
\[ \|\rho\|_{H^1}\lesssim \|-\Delta u+u-|u|^{p-1}u\|_{H^{-1}}. \]
Testing \eqref{5.81} with $Q$, we obtain
\[ |\alpha-1|\lesssim \|-\Delta u+u-|u|^{p-1}u\|_{H^{-1}}. \]
Thus the conclusion follows.
\end{proof}

\begin{proof}[Proof of Corollary \ref{corollary1.3}]
Note that the co-compact result in Theorem \ref{theorem1.3} still holds for complex-valued functions. 
  From the proof of Corollary \ref{corollary1.2}, we actually have
    \[    \inf_{y, \theta\in \mathbb{R}} \|u_0-e^{i\theta} Q(\cdot+y)\|_{H^1} \leq C\|\Delta u_0-u_0+|u_0|^{p-1}u_0\|_{H^{-1}}.  \]
    Moreover, all complex-valued solutions to the equation
  $$\frac{d^2u}{d x^2} -u+|u|^{p-1}u=0$$
are of the form $e^{i\theta}Q(\cdot+y)$ for some $y,\theta \in \mathbb{R}$ ( cf. \cite[Theorem 8.16]{Cazenave2003}). Therefore, Corollary \ref{corollary1.3} follows directly from the conservation of energy, and the results of Weinstein \cite{Weinstein1985} and \cite{Weinstein1986} (also see \cite[Section 2.3]{Martel2006}):
In $L^2$ subcritical case, i.e. when $p<1+\frac{4}{d}$,  the solution $u(t)$ to the NLS  equation with initial value
 $u_0$ satisfies
    \[    \inf_{y, \theta\in \mathbb{R}} \|u(t)-e^{i\theta} Q(\cdot+y)\|_{H^1} \leq C\inf_{y, \theta\in \mathbb{R}} \|u_0-e^{i\theta} Q(\cdot+y)\|_{H^1},  \]
   provided that $\inf\limits_{y, \theta\in \mathbb{R}} \|u_0-e^{i\theta} Q(\cdot+y)\|_{H^1}$  is sufficiently small.
\end{proof}

\section{Acknowledgements}
Authors want to give their special thanks to Ilya Bogdanov for providing valuable insights on Lemma \ref{point} via MathOverflow. Also, they are grateful to Giorgio Metafune and Iosif Pinelis for helpful comments regarding the questions on the ODE of the ground state. Additionally, Xin Liao extends his sincere gratitude to Professors Tai-Peng Tsai and Herbert Koch for their insightful responses to the inquiries on soliton theory.
Hua Chen is supported by National Natural Science Foundation of China (Grant Nos. 12131017, 12221001) and National Key R$\&$D Program of China (no. 2022YFA1005602).
\section{Declarations}
On behalf of all authors, the corresponding author states that there is no conflict of interest. Our manuscript has no associated data.

\addcontentsline{toc}{section}{References}

\end{document}